\long\def\dropnow#1{\relax}
\newtheorem{theorem}{Theorem}
\newtheorem{proposition}{Proposition}
\newtheorem{corollary}{Corollary}
\newtheorem{remark}{Remark}
\title{Symmetries of sub-Riemannian surfaces} 
\subjclass{Primary 53C05 Connections, general theory; Secondary 53C17 Sub-Riemannian geometry}
\keywords{
Nonholonomic mechanics, equivalence method, G-structures, Sub-Riemannian geometry, Vakonomic dynamics 
 }
\date{August 26, 2009}
\author{Jos\'e Ricardo Arteaga \& Mikhail Malakhaltsev}
\address{Universidad de Los Andes, Bogota, Colombia}
\email{jarteaga@uniandes.edu.co}
\address{Kazan State University, Kazan, Russia}
\email{mikhail.malakhaltsev@ksu.ru}
\begin{document}
\maketitle 

\begin{abstract}
We obtain some results on symmetries of sub-Riemannian surfaces. In case of contact sub-Riemannian surface we base on invariants found by Hughen \cite{Hughen}. Using these invariants, we find conditions under which a sub-Riemannian surface does not admit symmetries. If a surface admits symmetries, we show how invariants help to find them. It is worth noting, that the obtained conditions can be explicitly checked for a given contact sub-Riemannian surface. Also, we consider sub-Riemannian surfaces which are not contact and find their invariants along the surface where the distribution fails to be contact.  
\end{abstract}


\section*{Introduction}
A sub-Riemannian manifold is a $k$-dimensional distribution endowed by a metric tensor on an $n$-dimensional manifold. At present sub-Riemannian geometry is intensively studied, this is motivated by applications in various fields of science (see, e.\.g. the book \cite{Montgomery}, where many applications of sub-Riemannian geometry are presented; also, for interesting examples, we refer the reader to \cite{Bloch1}, \cite{Pavlov}, \cite{Schempp}, where applications to mechanics, thermodynamics, and biology are given). At the same time,  various aspects of the theory of symmetries of sub-Riemannian manifolds are widely investigated because symmetries are always of great importance for applications \cite{Bloch2}, \cite{Olver}. Many papers are devoted to the theory of homogeneous (in part, symmetric) sub-Riemannian manifolds (see e.\,g. \cite{Falbel1}, \cite{Falbel2}, \cite{Hughen}, \cite{Sachkov}). The main investigation tool in these papers is the Lie algebras theory as is usual when we study homogeneous spaces.

In the present paper we study symmetries of sub-Riemannian surfaces, i.\.e. of sub-Riemannian manifolds with $k=2$ and $n=3$. Our main goal is to give a practical tool (or an algorithmic procedure) for investigation of symmetries of a sub-Riemannian surface. The paper is organized as follows. In the first section we give in details construction of invariants of a contact sub-Riemannian surface using the Cartan reduction procedure (here we follow \cite{Hughen}) and show how to calculate them. In the second section we demonstrate how to apply invariants to finding symmetries of a contact sub-Riemannian surface. Finally, in the third section we consider a sub-Riemannian surface without assumption that it is contact and find invariants along the ``singular surface'', where the distribution fails to be contact.

\section{Contact sub-Riemannian surfaces}

Let $M$ be an $n$-dimensional manifold and $\Delta$ be a $k$-dimensional distribution on $M$ endowed by a metric tensor field 
\begin{equation}
\forall p \in M, \quad \langle \cdot,\cdot \rangle_p : \Delta_p \times \Delta_p \to \mathbb{R}.
\label{<eq:0_1>}
\end{equation}
Then $(M,\Delta,\langle \cdot,\cdot \rangle)$ is called a \textit{sub-Riemannian manifold} \cite{Montgomery}.

In the present paper we consider a \textit{sub-Riemannian surface} 
$\mathcal S = (M,\Delta,\langle \cdot,\cdot \rangle)$, i.\,e. a two-dimensional distribution $\Delta$ on 
a three-dimensional manifold $M$, where $\Delta$ is endowed by a metric tensor field $\langle \cdot,\cdot \rangle$. In addition, we assume that \textit{the distribution $\Delta$ and the manifold $M$ are oriented}. 
Note that we do not suppose that any metric on $M$ is given.

Throughout the paper we will denote the Lie algebra of vector fields on a manifold $N$ by $\mathfrak{X}(N)$, and the space of covector fields by $\mathfrak{X}(N)^*$. Also the space of $r$-forms on $N$ will be denoted by $\Lambda^r(N)$.

\subsection{$G$-structure associated with a sub-Riemannian surface}

\subsubsection{Elements of theory of  $G$-structures}
\label{subsubsec:g_structures}
Recall notions and results of the theory of $G$-structures we use in the present paper (for the details we refer the reader to \cite{Montgomery} and \cite{KN}).

\paragraph{Tautological forms, pseudoconnection form, and structure equations} 
Let $M$ be a smooth $n$-dimensional manifold, and $\pi : B(M) \to M$ be the coframe bundle of $M$.

On $B(M)$  the \textit{tautological forms} $\theta^a \in \Omega^1(B(M))$  are defined as follows  \cite{KN}. 
For a point $\xi \in B(M)$ ($\xi=\{\xi^a\}_{a=\overline{1,n}}$ is a coframe of $T_p M$, where $p = \pi(\xi)$), we set  
\begin{equation}
\theta^a_\xi : T_\xi (B(M)) \to \mathbb{R}, \quad \theta^a_\xi(X) = \xi^a(d\pi(X)).
\label{eq:1_11}
\end{equation}

Now, on a neighborhood $U$ of a point $p \in M$, take a coframe field $\eta = \{\eta^a\}$. This gives a trivialization  $\alpha : \pi^{-1}(U) \to U \times GL(n)$: 
to a coframe $\xi$ at $p \in U$ we assign  $(p,g) \in U \times GL(n)$ such that $\xi^a = \tilde g^a_b \eta^b_p$, where $||\tilde g^a_b || = g^{-1}$. 

For a coframe field $\eta$ on $U$ let us consider the pullback $1$-forms $\bar\eta^a = d\pi^*\eta^a$ on 
$U \times GL(n) \cong \pi^{-1}(U) \subset B(M)$. Then 
\begin{equation}
\theta^a_{(p,g)} = \tilde g^a_b \bar\eta^b_{(p,g)} = \tilde g^a_b d\pi^* \eta^b_p.
\label{eq:1_12}
\end{equation} 

A \textit{$G$-structure} $P \to M$ is a principal subbundle of $\pi : B(M) \to M$ with structure group $G \subset GL(n)$. The tautological forms on $P$ are the restrictions of $\theta^a$ to $P$ and will be denoted by the same letters.  

Let us denote by $\mathfrak{g}$ the Lie algebra of the Lie group $G$.
A \textit{pseudoconnection form} $\omega$ on a $G$-structure $\pi : P \to M$ is a $\mathfrak{g}$-valued 1-form on $P$ such that $\omega(\sigma(a))=a$, where $\sigma(a)$ is the fundamental vector field (\cite{KN}, Ch. I, Sec. 5) on $P$ corresponding to $a \in \mathfrak{a}$ .

Given a pseudoconnection form $\omega$, we have \textit{structure equations} on $P$:
\begin{equation}
d\theta^a = \omega^a_b \wedge \theta^b + T^a_{bc} \theta^b \wedge \theta^c
\label{eq:1_13}
\end{equation}
where the functions $T^a_{bc} : P \to \mathbb{R}$ uniquely determined by equations \eqref{eq:1_13} are called \textit{torsion functions}, and the map 
$T : P \to \Lambda^2 \mathbb{R}^n \otimes \mathbb{R}^n$, $ \xi \to \{T^a_{bc}(\xi)\}$, is called the \textit{torsion} of the pseudoconnection $\omega^a_b$.

\paragraph{Structure function}
Let us find how the torsion changes under change of the pseudoconnection.
If $\omega^a_b$, $\hat\omega^a_b$ are pseudoconnections on $P$, then $\mu^a_b = \hat{\omega}^a_b-\omega^a_b$ is a $\mathfrak{g}$-valued form on $P$ with property that $\mu(\sigma(a)) = 0$ for any $a \in \mathfrak{g}$. Then  $\mu^a_b = \mu^a_{bc} \theta^c$.  

\begin{multline}
d\theta^{a} = \hat\omega^{a}_{b} \wedge \theta^{b} + \hat T^{a}_{bc} \theta^{b} \wedge \theta^{c} = 
\left (\omega^{a}_{b} + \mu^{a}_{bc}\theta^{c}\right )\wedge \theta^{b} + \hat T^{a}_{bc} \theta^{b} \wedge \theta^{c} = 
\\
\omega^{a}_{b} \wedge \theta^{b} + \left (\hat T^{a}_{bc} - \mu^{a}_{[bc]}\right ) \theta^{b} \wedge \theta^{c} =
\omega^{a}_{b}\wedge \theta^{b} + T^{a}_{bc} \theta^{b} \wedge \theta^{c}
\end{multline}
	Hence follows that 
\begin{equation}
\hat{\omega}^a_b=\omega^a_b  + \mu^a_{bc}\theta^c \Rightarrow \hat T^{a}_{bc} = T^{a}_{bc}  + \mu^{a}_{[bc]}
\label{eq:1_31}
\end{equation}
Let us define the Spencer operator $\delta$ from the space of tensors $T^2_1(\mathbb{R}^n)$ of type $(2,1)$ to the space $\Lambda^2(\mathbb{R}^n) \otimes \mathbb{R}^n$ as follows:   
\begin{equation}
\delta : t^a_{bc} \in T^2_1(\mathbb{R}^n) \mapsto t^a_{[bc]} = \frac{1}{2}(t^a_{bc} - t^a_{cb}).
\label{eq:1_32}
\end{equation}
Note that $\mathfrak{g}\otimes(\mathbb{R}^n)^* \subset \mathfrak{gl}(n) \otimes \mathbb{R}^* \cong T^2_1(\mathbb{R}^n)$ and we will denote the restriction of $\delta$ to $\mathfrak{g}\otimes(\mathbb{R}^n)^*$ by the same letter $\delta$.
Thus, \eqref{eq:1_31} can be rewritten as follows:
\begin{equation}
\hat{\omega}^a_b=\omega^a_b  + \mu^a_{bc}\theta^c \Rightarrow \hat T^{a}_{bc} = T^{a}_{bc}  + \delta(\mu^{a}_{bc}).
\label{eq:1_32_1}
\end{equation}
From \eqref{eq:1_32_1} we conclude that \textit{if 
$\delta: \mathfrak{g}\otimes(\mathbb{R}^n)^* \to \Lambda^2(\mathbb{R}^n) \otimes \mathbb{R}^n$ is a monomorphism, then, pseudoconnections $\omega^a_b$, $\hat\omega^a_b$ with the same torsion $T^a_{bc}$ coincide}. 

Now denote
\begin{equation}
\mathcal{T} = \frac{\Lambda^2(\mathbb{R}^n) \otimes \mathbb{R}^n}{\delta(\mathfrak{g}\otimes(\mathbb{R}^n)^*)}.
\label{eq:1_33}
\end{equation}
From \eqref{eq:1_32_1}  it follows that one can correctly define the \textit{structure function}:
\begin{equation}
\mathcal{C} : P \to \mathcal{T},
\quad 
\xi \mapsto [T^a_{bc}(\xi)].
\label{eq:1_34}
\end{equation}
\paragraph{$G$-equivariance of structure function}
The group $G$ acts on $\Lambda^2(\mathbb{R}^n) \otimes \mathbb{R}^n$ from the right as follows:
\begin{equation}
(\bar\rho(g)T)^a_{bc} = \tilde g^a_r T^r_{pq} g^p_b g^q_c
\label{eq:1_35}
\end{equation}
and one can easily prove that the subspace $\delta(\mathfrak{g}\otimes(\mathbb{R}^n)^*)$ is invariant under this action. Then  we have the following $G$-action on $\mathcal{T}$:  
\begin{equation}
\forall g \in G, \quad  \rho(g) : 
\mathcal{T} \to \mathcal{T},
\quad 
[T^a_{bc}] \mapsto  [\tilde g^a_r T^r_{pq} g^p_b g^q_c]
\label{eq:1_36}
\end{equation}
By cumbersome calculations, from the structure equations \eqref{eq:1_13} one can obtain that 
\begin{equation}
\mathcal{C}(\xi g) = \mathcal{C}(g^{-1}\xi) = \rho(g) \mathcal{C}(\xi), \forall \xi \in P, g \in G.
\label{eq:1_37}
\end{equation} 
\begin{remark}
If $\omega$ is a connection, one can prove that $T^a_{bc} (\xi g) = \tilde g^a_r T^r_{pq}(\xi) g^p_b g^q_c$, however it is wrong if $\omega$ is a pseudoconnection. In this case, we have only that   
$T^a_{bc} (\xi g) = \tilde g^a_r T^r_{pq}(\xi) g^p_b g^q_c + \nu^a_{bc}$, where $\nu^a_{bc} \in \delta(\mathfrak{g}\otimes(\mathbb{R}^n)^*)$.
\end{remark}

\subsubsection{Cartan reduction}
\label{subsubsec:cartan_reduction}
Let $P \to M$ be a $G$-structure.  Let $\mathcal{T} = \sqcup \mathcal{T}_\alpha$ be the decomposition  of $\mathcal{T}$ into orbits of  the $G$-action \eqref{eq:1_36}. Assume that the structure function $c$ takes values in one orbit $\mathcal{T}_0$, only. 

Fix $\tau_0 \in \mathcal{T}_0$. Then 
\begin{equation}
P_1 = \{\xi \mid  \mathcal{C}(\xi) = \tau_0\}
\label{eq:1_38}
\end{equation}
is the total space of a principal $G_1$-subbundle of $P$, where 
\begin{equation}
G_1 = \{ g \in G \mid \rho(g)\tau_0 = \tau_0 \}.
\label{eq:1_39}
\end{equation}

They say that the $G_1$-structure $P_1 \to M$ is obtained by the \textit{Cartan reduction} from the $G$-structure $P \to M$.

\subsection{$G$-structure associated to a sub-Riemannian surface}

Let $\mathcal S = (M,\Delta,\langle \cdot,\cdot \rangle)$ be a sub-Riemannian surface. 
We say that a coframe $\eta = \left(\eta^{1}, \eta^{2}, \eta^{3} \right)$ of $T_p M$, $p \in M$, is adapted to $\mathcal S$ if
\begin{itemize}
\item[(1)]%
$\eta$ is positively oriented, and $(\eta^1 |_{\Delta_p}, \eta^2 |_{\Delta_p})$ is a positively oriented coframe of $\Delta_p$;
\item[(2)]%
$\eta^3 \in Ann(\Delta)_p$, or, equivalently, $\eta^3 (W) = 0$ for any $W \in \Delta_p$; 
\item[(3)]%
$ \langle W , W \rangle = [\eta^{1}(W)]^{2} + [\eta^{2}(W)]^2$ for any $W \in \Delta_p$.
\end{itemize}

  To a given sub-Riemannian surface  $\mathcal S = (M,\Delta,\langle \cdot,\cdot \rangle)$ we associate the principal subbundle $B_0 \subset B$ consisting of adapted frames. It is clear that the structure group of $B_0$ is  
\begin{equation}\nonumber
G_{0}=
\left \{
\left (
\begin{array}{cc}
A & B \\
0 & c
\end{array}
\right )\mid A \in SO(2), B=
\left (
\begin{array}{cc}
b_{1} \\
b_{2}
\end{array}
\right )
\in \mathbf{R}^{2},
c\in \mathbf{R}\backslash \left \{0\right \}
\right \}.
\end{equation}
One can easily prove

\begin{proposition}
\label{prop:1_1}
A sub-Riemannian surface $\mathcal S = (M,\Delta,\langle \cdot,\cdot \rangle)$ is equivalent to a $G_0$-structure on $M$.
\end{proposition}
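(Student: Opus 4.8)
The plan is to prove the equivalence in both directions: that the assignment $\mathcal{S} \mapsto B_0$ produces an honest principal $G_0$-subbundle of $B(M)$, and conversely that a $G_0$-structure $P \to M$ determines an oriented metric $2$-distribution, with the two assignments mutually inverse. The construction of $B_0$ from $\mathcal{S}$ is already described, so the real content is (i) checking that $B_0$ is a principal $G_0$-bundle and (ii) reconstructing $(\Delta, \langle\cdot,\cdot\rangle)$ from $P$ in a well-defined way.

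First I would show that adapted coframes exist near every point, so that $B_0 \to M$ has nonempty fibers. Around $p$, the metric and the orientation of $\Delta$ provide a positively oriented orthonormal local frame $(e_1, e_2)$ of $\Delta$, which I extend to a positively oriented frame $(e_1, e_2, e_3)$ of $TM$ using the orientation of $M$. Its dual coframe $(\eta^1, \eta^2, \eta^3)$ satisfies (1)--(3): $\eta^3(e_1) = \eta^3(e_2) = 0$ gives (2) (so in fact $\Delta = \ker \eta^3$), the restrictions $\eta^1|_\Delta, \eta^2|_\Delta$ are dual to an orthonormal basis of $\Delta_p$ giving (3), and the orientation requirements hold by construction.

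The heart of the argument is to identify the group relating two adapted coframes. Writing $\hat\eta^a = M^a_b \eta^b$ at $p$, condition (2) demands that $\hat\eta^3$ annihilate $\Delta = \ker\eta^3$; since $\eta^1, \eta^2$ restrict to a coframe of $\Delta_p$ while $\eta^3$ does not, this forces $M^3_1 = M^3_2 = 0$ and $M^3_3 = c \neq 0$. Restricting to $\Delta_p$ annihilates the $\eta^3$-components, so condition (3) makes the top-left $2 \times 2$ block orthogonal and the positive-orientation part of (1) makes its determinant $+1$, i.e. $A \in SO(2)$; the entries $M^1_3, M^2_3$ are entirely free, because shifting $\eta^1, \eta^2$ by multiples of $\eta^3$ leaves their restrictions to $\Delta$ unchanged. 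Thus $M$ has exactly the block form defining $G_0$ (up to fixing orientation conventions for the sign of $c$), and conversely every element of $G_0$ sends an adapted coframe to an adapted coframe. Since $GL(3)$ acts freely on coframes, $G_0$ therefore acts simply transitively on each fiber, so $B_0 \to M$ is a principal $G_0$-subbundle of $B(M)$, i.e. a $G_0$-structure.

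For the converse I would recover $\mathcal{S}$ from an arbitrary $G_0$-structure $P$: put $\Delta_p = \ker \eta^3$ for any coframe in the fiber $P_p$ (well defined since different coframes scale $\eta^3$ by the nonzero factor $c$), equip $\Delta_p$ with the quadratic form $[\eta^1]^2 + [\eta^2]^2$ (well defined since the $SO(2)$-block preserves the sum of two squares and the $B$-block acts trivially on $\Delta$), and take the orientations induced by the coframe and by $(\eta^1|_\Delta, \eta^2|_\Delta)$. One then checks that the bundle $B_0$ built from this recovered structure is the original $P$, and vice versa. I expect the only genuine obstacle to be the bookkeeping in the third paragraph --- in particular recognizing that the off-diagonal block $B \in \mathbb{R}^2$ is unconstrained precisely because it adjusts $\eta^1, \eta^2$ by multiples of $\eta^3$, which vanishes on $\Delta$; this is exactly what makes $G_0$ larger than $SO(2) \times \mathbb{R}^{*}$.
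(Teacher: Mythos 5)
Your proof is correct. The paper itself offers no argument for this proposition (it is dismissed with ``one can easily prove''), and what you supply is exactly the expected verification: local existence of adapted coframes from the two orientations and the metric, identification of the transition matrices between adapted coframes as the block-triangular group $G_0$, simple transitivity of $G_0$ on the fibers, and the inverse construction recovering $(\Delta,\langle\cdot,\cdot\rangle)$ from the $G_0$-structure. The one substantive point you raise --- that conditions (1) as literally stated (positive orientation of the full coframe \emph{and} of its restriction to $\Delta_p$, together with $A\in SO(2)$) actually force $c>0$, whereas the paper's $G_0$ allows all $c\neq 0$ --- is a real wrinkle in the paper's definitions rather than in your argument; you are right to flag it, and it is worth noting that it mildly affects the later orbit analysis (with $c>0$ the scaling $c^{-1}$ in \eqref{eq:1_206} preserves the sign of $T^3_{12}$, so the nonzero locus splits into two orbits rather than one). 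Nothing in your proof needs repair.
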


\subsubsection{Contact sub-Riemannian surfaces}
\paragraph{Contact distributions}

Let $\omega$ be a $1$-form on a $(2n+1)$-dimensional manifold $M$. The form $\omega$ is said to be \textit{contact} if 
\begin{equation}
 \underbrace{\omega \wedge d\omega \wedge \ldots \wedge d\omega}_n \ne 0.
\label{eq:1_300}
\end{equation}
A $1$-form is contact if and only  if $\omega$ is nonvanishing (hence the Pfaff equation $\omega=0$ determines a $2n$-dimensional distribution $\Delta$), and $d\omega |_\Delta$ is nondegenerate. 

Let $\Delta$ be a $2n$-dimensional distribution  on a $(2n+1)$-dimensional manifold $M$. 
Denote by $Ann(\Delta)$ the vector subbundle in $T^* M$ of rank $1$ such that the fiber of $Ann(\Delta)$ at $p \in M$ is 
\begin{equation}
Ann(\Delta)_p = \{\omega \in T^*_p M \mid \omega(W) = 0 \ \forall W \in \Delta_p\}.
\label{eq:1_1}
\end{equation} 
The distribution $\Delta$ is said to be \textit{contact} if for each $p \in M$ there exists a contact section $\omega$ of $Ann(\Delta)$ in a neighborhood of $p$. Note that this definition does not depend on the choice of $\omega$: \textit{if $\Delta$ is contact, then any nonvanishing section of $Ann(\Delta)$ is contact}.

We say that a sub-Riemannian surface $\mathcal S = (M,\Delta,\langle \cdot,\cdot \rangle)$ is \textit{contact} if the distribution $\Delta$ is contact.

\begin{theorem}\label{Th:Theorema-1}
Any contact sub-Riemannian surface $\mathcal S = (M,\Delta,\langle \cdot,\cdot \rangle)$ uniquely determines an $SO(2)$-structure $B_{2}\to M$ and a connection  on $B_2$ with the connection form  
\begin{equation}
\omega = 
\left (
\begin{array}{ccc}
0 & \alpha & 0 \\
-\alpha & 0 & 0 \\
0 & 0 & 0 
\end{array}
\right ).
\label{eq:1_14}
\end{equation}
The $1$-form $\alpha$ in \eqref{eq:1_14} together with the tautological forms $\theta^a$ give a coframe field on $B_2$. The structure equations \eqref{eq:1_13} are written as follows: 
\begin{equation}
\left (
\begin{array}{c}
d\theta^{1}\\
d\theta^{2}\\
d\theta^{3}
\end{array}
\right )
=
\left (
\begin{array}{ccc}
0 & \alpha & 0 \\
-\alpha & 0 & 0 \\
0 & 0 & 0 
\end{array}
\right )
\wedge
\left (
\begin{array}{c}
\theta^{1}\\
\theta^{2}\\
\theta^{3}
\end{array}
\right )
+
\left (
\begin{array}{ccc}
a_{1} & a_{2} & 0 \\
a_{2} & -a_{1} & 0 \\
0 & 0 & 1 
\end{array}
\right )
\left (
\begin{array}{c}
\theta^{2}\wedge\theta^{3}\\
\theta^{3}\wedge\theta^{1}\\
\theta^{1}\wedge\theta^{2}
\end{array}
\right )
\label{eq:1_15}
\end{equation}
\end{theorem}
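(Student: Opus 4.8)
The plan is to carry out the Cartan reduction procedure of Subsection~\ref{subsubsec:cartan_reduction} starting from the $G_0$-structure $B_0$ of adapted coframes provided by Proposition~\ref{prop:1_1}, successively reducing the $4$-dimensional group $G_0$ down to the $1$-dimensional group $SO(2)$ in three steps. At each step I would choose a $\mathfrak{g}_0$-valued pseudoconnection $\omega$, write the structure equations \eqref{eq:1_13}, and then exploit the two sources of freedom highlighted in the excerpt: the freedom $\hat\omega = \omega + \mu$ in the pseudoconnection, which shifts the torsion by $\delta(\mu)$ as in \eqref{eq:1_32_1}, and the $G_0$-equivariance \eqref{eq:1_37} of the structure function, which lets me move the torsion inside its orbit and single out a normal form via \eqref{eq:1_38}--\eqref{eq:1_39}.

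First I would record that, because $\theta^3$ annihilates $\Delta$ by condition (2) of adaptedness, $d\theta^3 \equiv \kappa\,\theta^1\wedge\theta^2 \pmod{\theta^3}$ for some function $\kappa$ on $B_0$, and that the contact hypothesis is exactly the statement $\kappa \ne 0$, i.e. the nondegeneracy of $d\theta^3|_\Delta$. A direct use of \eqref{eq:1_35}--\eqref{eq:1_37} shows that the corresponding torsion component $T^3_{12}$ is rescaled by $c^{-1}$ under the dilation factor $c$ of $G_0$ (the rotation contributes $\det A = 1$ and the translations $b_1,b_2$ do not enter this component), so the orbit of the structure function contains a representative with $\kappa = 1$. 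Imposing $\kappa = 1$ cuts $B_0$ down to the subbundle on which this holds and removes the $c$-freedom; this is the step where the contact assumption is indispensable.

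Next, with $c$ normalized, I would examine the remaining torsion in $d\theta^3$, namely the coefficients of $\theta^2\wedge\theta^3$ and $\theta^3\wedge\theta^1$. The translation parameters $b_1,b_2$ act on exactly these components through \eqref{eq:1_37}, so normalizing them to zero (so that $d\theta^3 = \theta^1\wedge\theta^2$ precisely, as in the bottom row of \eqref{eq:1_15}) fixes $b_1,b_2$ and reduces the group to $SO(2)$; the resulting subbundle is $B_2$. It then remains to pin down the connection. On the $SO(2)$-reduced bundle the map $\delta : \mathfrak{so}(2)\otimes(\mathbb{R}^3)^* \to \Lambda^2(\mathbb{R}^3)\otimes\mathbb{R}^3$ is a monomorphism (here $\mathfrak{so}(2)\otimes(\mathbb{R}^3)^*$ is only $3$-dimensional and a short check shows $\delta$ has trivial kernel, in contrast with the larger $\mathfrak{g}_0\otimes(\mathbb{R}^3)^*$), so by the criterion stated right after \eqref{eq:1_32_1} there is a \emph{unique} pseudoconnection compatible with the normalized torsion, and one verifies it is a genuine connection of the form \eqref{eq:1_14}. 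Feeding this connection back into \eqref{eq:1_13} and using $SO(2)$-equivariance of the leftover torsion, together with the fact that the $\theta^1\wedge\theta^2$-components of $d\theta^1,d\theta^2$ lie in the image of $\delta$ and are absorbed, forces the surviving upper $2\times 2$ block to be symmetric and trace-free, i.e. $\bigl(\begin{smallmatrix} a_1 & a_2 \\ a_2 & -a_1 \end{smallmatrix}\bigr)$, producing the invariants $a_1,a_2$ of \eqref{eq:1_15}. Uniqueness of $B_2$ holds because every normalization above is canonical, and uniqueness of the connection is the injectivity of $\delta$ just invoked.

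The main obstacle I anticipate is the bookkeeping of the reduction: at each stage one must confirm that the torsion component being normalized genuinely lies in a single $G_0$-orbit so that \eqref{eq:1_38} defines a smooth subbundle, and that the chosen normalizations successively exhaust exactly $c$, then $b_1,b_2$, leaving only the rotation $\alpha$ undetermined. Concretely this requires computing how each of $c, b_1, b_2$ enters \eqref{eq:1_37} for the specific torsion components involved, and establishing the key linear-algebra fact that $\delta$ is injective on $\mathfrak{so}(2)\otimes(\mathbb{R}^3)^*$ although it fails to be so on $\mathfrak{g}_0\otimes(\mathbb{R}^3)^*$. These transformation and kernel computations are routine but lengthy — the ``cumbersome calculations'' alluded to before \eqref{eq:1_37} — and getting the signs and index placements right so that the normalized equations emerge exactly as \eqref{eq:1_15} is where care is required.
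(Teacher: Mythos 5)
Your proposal follows the same route as the paper: a three--step Cartan reduction that first uses the scaling freedom $c$ to normalize $T^3_{12}=1$ (this is where contactness enters, exactly as you say), then uses the translations $b_1,b_2$ to kill $T^3_{23},T^3_{31}$ so that $d\theta^3=\theta^1\wedge\theta^2$ and the group drops to $SO(2)$, and finally invokes injectivity of $\delta$ on $\mathfrak{g}_2\otimes(\mathbb{R}^3)^*$ to get a unique pseudoconnection $\alpha$. All of this matches the paper's Steps 1--3, including the computation of the orbits and the identification of which torsion components are absorbed.

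The one place where your justification does not actually deliver the stated conclusion is the final normal form of the residual torsion. Absorption accounts for the vanishing of the $\theta^1\wedge\theta^2$--components of $d\theta^1,d\theta^2$ and for the trace--free relation $T^1_{23}=-T^2_{31}$ (since $e_1\otimes e^2\wedge e^3+e_2\otimes e^3\wedge e^1$ lies in $\delta(\mathfrak{g}_2\otimes(\mathbb{R}^3)^*)$), but neither absorption nor $SO(2)$--equivariance forces the \emph{symmetry} $T^1_{31}=T^2_{23}$: the classes $[e_1\otimes e^3\wedge e^1]$ and $[e_2\otimes e^2\wedge e^3]$ are independent in the quotient $\mathcal{T}_2$, and equivariance only describes how the torsion varies along a fiber, not a pointwise linear relation among its components. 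The missing ingredient is the integrability identity $0=d(d\theta^3)=d(\theta^1\wedge\theta^2)=d\theta^1\wedge\theta^2-\theta^1\wedge d\theta^2=(T^1_{31}-T^2_{23})\,\theta^1\wedge\theta^2\wedge\theta^3$, which is how the paper obtains $v=w=a_2$ in \eqref{eq:1_228}. With that one identity added, your outline is complete and coincides with the paper's proof.
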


This theorem was proved by K. Hughen in \cite{Hughen} (also a sketch of the proof is given in  \cite{Montgomery}, Ch.~7, 7.10). We present here a detailed proof of this theorem which is based on the Cartan reduction procedure as it was exposed in \ref{subsubsec:g_structures} and  \ref{subsubsec:cartan_reduction}.

\subsubsection{Proof of Theorem~\ref{Th:Theorema-1}}

\textbf{Step 1}. We start with the $G_0$-structure $B_0 \to M$ associated with $\mathcal{S}$ (see Proposition~\ref{prop:1_1}). We have 
\begin{equation}
G_{0}=\left \{
\left.
\left(
\begin{array}{ccc}
\cos\varphi & -\sin\varphi & b_{1}\\
\sin\varphi & \cos\varphi & b_{2} \\
0 & 0 & c  
\end{array}
\right)
\ \right| c \ne 0
\right\}
\label{eq:1_201}
\end{equation}
Then the Lie algebra of $G_0$ is
\begin{equation}
\mathfrak{g}_{0}=\left \{
\left.
\left (
\begin{array}{ccc}
0 & \alpha & \beta_{1}\\
-\alpha & 0 & \beta_{2} \\
0 & 0 & \gamma  
\end{array}
\right )
\ \right|
\alpha, \beta_1, \beta_2, \gamma \in \mathbb{R}
\right\}
\label{eq:1_202}
\end{equation}
Now we will calculate 
\begin{equation}
\mathcal{T}_0 = 
\frac{\Lambda^2(\mathbb{R}^3) \otimes \mathbb{R}^3}{\delta(\mathfrak{g}_0\otimes(\mathbb{R}^3)^*)},
\label{eq:1_203}
\end{equation}
 (see \eqref{eq:1_33}). Let us denote the standard basis of $\mathbb{R}^3$ by $e_1$, $e_2$, $e_3$, and the dual basis by $e^1$, $e^2$, $e^3$. Then the basis of $\mathfrak{g}_0$ is 
\begin{equation}
\mathcal{E}_1 = e_1 \otimes e^2 - e_2 \otimes e^1;
\mathcal{E}_2 = e_1 \otimes e^3;
\mathcal{E}_3 = e_2 \otimes e^3;
\mathcal{E}_4 = e_3 \otimes e^3;
\label{eq:1_204}
\end{equation}
and $\mathcal{E}_i \otimes e^a$, $i=\overline{1,4}$, $a=\overline{1,3}$, is the basis of $\mathfrak{g}_0 \otimes (\mathbb{R}^3)^*$. 
Then
$\delta: \mathfrak{g}_0 \otimes(\mathbb{R}^n)^* \to \Lambda^2(\mathbb{R}^n) \otimes \mathbb{R}^n$ acts on the basis elements as follows
\begin{equation}
\begin{cases}
\mathcal{E}_1 \otimes e^1 \mapsto - e_1 \otimes e^2 \wedge  e^1,
\quad
\mathcal{E}_1 \otimes e^2 \mapsto  - e_2 \otimes e^1 \wedge  e^2,
\\
\mathcal{E}_1 \otimes e^3 \mapsto   e_1 \otimes e^2 \wedge  e^3 - e_2 \otimes e^1 \wedge e^3,
\\
\mathcal{E}_2 \otimes e^1 \mapsto   e_1 \otimes e^3 \wedge  e^1,
\quad
\mathcal{E}_2 \otimes e^2 \mapsto   e_1 \otimes e^3 \wedge  e^2,
\quad
\mathcal{E}_2 \otimes e^3 \mapsto  0, 
\\
\mathcal{E}_3 \otimes e^1 \mapsto   e_2 \otimes e^3 \wedge  e^1,
\quad
\mathcal{E}_3 \otimes e^2 \mapsto   e_2 \otimes e^3 \wedge  e^2,
\quad
\mathcal{E}_3 \otimes e^3 \mapsto  0, 
\\
\mathcal{E}_4 \otimes e^1 \mapsto   e_3 \otimes e^3 \wedge  e^1,
\quad
\mathcal{E}_4 \otimes e^2 \mapsto   e_3 \otimes e^3 \wedge  e^2,
\quad
\mathcal{E}_4 \otimes e^3 \mapsto  0, 
\end{cases} 
\label{eq:1_205}
\end{equation}
From \eqref{eq:1_205} we get that $\mathcal{T}_0$ is spanned by $[e_3 \otimes e^1 \wedge e^2]$ and so is one-dimensional. 

Now let us find the action of $G_0$ on $\mathcal{T}_0$. From \eqref{eq:1_36} we get that, for any $g \in G_0$, 
\begin{equation}
\rho(g) [e_3 \otimes e^1 \wedge e^2] = [ \tilde g^a_3 g^1_b g^2_c e_a \otimes e^b \wedge e^c] = [\tilde g^3_3 (g^1_1 g^2_2 - g^2_1 g^1_2) e_3 \otimes e^1 \wedge e^2 ] = c^{-1} [e_3 \otimes e^1 \wedge e^2].
\label{eq:1_206}
\end{equation}
Hence the action of $G_0$ on $\mathcal{T}_0$ has two orbits: $\mathcal{O}_0 = \{0 \in \mathcal{T}_0\}$ and $\mathcal{O}_1 = \{t \in \mathcal{T}_0 \mid t \ne 0\}$. 

Let us prove that, if $\mathcal{S}$ is contact, the structure function $\mathcal{C}$ takes values in $\mathcal{O}_1$.
The structure equations \eqref{eq:1_13}  can be written as follows :
\begin{equation}
\left (
\begin{array}{c}
d\theta^{1}\\
d\theta^{2}\\
d\theta^{3}
\end{array}
\right )
=
\left (
\begin{array}{ccc}
0 & \alpha &\beta \\
-\alpha & 0 & \gamma \\
0 & 0 & \delta 
\end{array}
\right )
\wedge
\left (
\begin{array}{c}
\theta^{1}\\
\theta^{2}\\
\theta^{3}
\end{array}
\right )
+
\left (
\begin{array}{ccc}
T_{23}^{1} & T_{31}^{1} &T_{12}^{1} \\
T_{23}^{2} & T_{31}^{2} &T_{12}^{2} \\
T_{23}^{3} & T_{31}^{3} &T_{12}^{3} 
\end{array}
\right )
\left (
\begin{array}{c}
\theta^{2}\wedge\theta^{3}\\
\theta^{3}\wedge\theta^{1}\\
\theta^{1}\wedge\theta^{2}
\end{array}
\right )
\label{eq:1_207}
\end{equation}
Now take a section $s : U \to B_0$, that is a coframe field $\eta^a$ adapted to $\mathcal{S}$ on $U$. Then 
from the definition of the tautological forms we have that $ds^* \theta^a = \eta^a$, hence  
$ds^*(d\theta^3 \wedge \theta^3) = d\eta^3 \wedge \eta^3 \ne 0$ because $\eta^3$ is a contact form (see \eqref{eq:1_300}). At the same time, from \eqref{eq:1_207}, we get that 
$d\theta^3 \wedge \theta^3 = T^3_{12} \theta^1 \wedge \theta^2 \wedge \theta^3$, hence follows that $T^3_{12} \ne 0$.
Thus $\mathcal{C}(s(p)) = [T^3_{12}(s(p)) e_3 \otimes e^1 \wedge e^2] \ne 0$ for any $p \in U$. As each $\xi \in \pi^{-1}(U)$ can be written as $\xi = s(p) g$, $p = \pi(\xi)$, and the structure function $\mathcal{C}$ satisfies \eqref{eq:1_37}, we have that $\mathcal{C}(\xi) \ne 0$ for any $\xi \in B_0$, hence $\mathcal{C}$ takes values in $\mathcal{O}_1$. 

Thus, we can make the Cartan reduction and pass to the $G_1$-structure $B_1 \to M$, where 
\begin{equation}
B_1 = \{\xi \mid  \mathcal{C}(\xi) = [e_3 \otimes e^1 \wedge e^2]\}
\label{eq:1_208}
\end{equation}
is the total space of a principal $G_1$-subbundle of $B_0$, and 
\begin{equation}
G_1 = \{ g \in G \mid \rho(g)[e_3 \otimes e^1 \wedge e^2] = [e_3 \otimes e^1 \wedge e^2] \} 
=\left \{
\left(
\begin{array}{ccc}
\cos\varphi & -\sin\varphi & b_{1}\\
\sin\varphi & \cos\varphi & b_{2} \\
0 & 0 & 1  
\end{array}
\right)
\right\}
\label{eq:1_209}
\end{equation}

\textbf{Step 2}.
The Lie algebra of $G_1$ is
\begin{equation}
\mathfrak{g}_{1}=\left \{
\left.
\left (
\begin{array}{ccc}
0 & \alpha & \beta_{1}\\
-\alpha & 0 & \beta_{2} \\
0 & 0 & 0 
\end{array}
\right )
\ \right|
\alpha, \beta_1, \beta_2,  \in \mathbb{R}
\right\}
\label{eq:1_210}
\end{equation}
and, by the construction of $B_1$, the structure equations have the form:
\begin{equation}
\left (
\begin{array}{c}
d\theta^{1}\\
d\theta^{2}\\
d\theta^{3}
\end{array}
\right )
=
\left (
\begin{array}{ccc}
0 & \alpha &\beta \\
-\alpha & 0 & \gamma \\
0 & 0 & 0 
\end{array}
\right )
\wedge
\left (
\begin{array}{c}
\theta^{1}\\
\theta^{2}\\
\theta^{3}
\end{array}
\right )
+
\left (
\begin{array}{ccc}
T_{23}^{1} & T_{31}^{1} &T_{12}^{1} \\
T_{23}^{2} & T_{31}^{2} &T_{12}^{2} \\
T_{23}^{3} & T_{31}^{3} & 1 
\end{array}
\right )
\left (
\begin{array}{c}
\theta^{2}\wedge\theta^{3}\\
\theta^{3}\wedge\theta^{1}\\
\theta^{1}\wedge\theta^{2}
\end{array}
\right )
\label{eq:1_210_1}
\end{equation}
With notation of the previous step we have  the basis of $\mathfrak{g}_1$ is 
\begin{equation}
\mathcal{E}_1 = e_1 \otimes e^2 - e_2 \otimes e^1;
\mathcal{E}_2 = e_1 \otimes e^3;
\mathcal{E}_3 = e_2 \otimes e^3;
\label{eq:1_211}
\end{equation}
and $\mathcal{E}_i \otimes e^a$, $i=\overline{1,3}$, $a=\overline{1,3}$, is the basis of $\mathfrak{g}_1 \otimes (\mathbb{R}^3)^*$. 
Then
$\delta: \mathfrak{g}_1 \otimes(\mathbb{R}^n)^* \to \Lambda^2(\mathbb{R}^n) \otimes \mathbb{R}^n$ acts on the basis elements as follows
\begin{equation}
\begin{cases}
\mathcal{E}_1 \otimes e^1 \mapsto - e_1 \otimes e^2 \wedge  e^1,
\quad
\mathcal{E}_1 \otimes e^2 \mapsto  - e_2 \otimes e^1 \wedge  e^2,
\\
\mathcal{E}_1 \otimes e^3 \mapsto   e_1 \otimes e^2 \wedge  e^3 - e_2 \otimes e^1 \wedge e^3,
\\
\mathcal{E}_2 \otimes e^1 \mapsto   e_1 \otimes e^3 \wedge  e^1,
\quad
\mathcal{E}_2 \otimes e^2 \mapsto   e_1 \otimes e^3 \wedge  e^2,
\quad
\mathcal{E}_2 \otimes e^3 \mapsto  0, 
\\
\mathcal{E}_3 \otimes e^1 \mapsto   e_2 \otimes e^3 \wedge  e^1,
\quad
\mathcal{E}_3 \otimes e^2 \mapsto   e_2 \otimes e^3 \wedge  e^2,
\quad
\mathcal{E}_3 \otimes e^3 \mapsto  0, 
\end{cases} 
\label{eq:1_212}
\end{equation}
From \eqref{eq:1_212} we get that
\begin{equation}
\mathcal{T}_1 = 
\frac{\Lambda^2(\mathbb{R}^3) \otimes \mathbb{R}^3}{\delta(\mathfrak{g}_1\otimes(\mathbb{R}^3)^*)},
\label{eq:1_213}
\end{equation}
is spanned by $[e_3 \otimes e^2 \wedge e^3]$, $[e_3 \otimes e^3 \wedge e^1]$, $[e_3 \otimes e^1 \wedge e^2]$ and so is three-dimensional. 

At the same time, by construction of $B_1$, the structure function $\mathcal{C}$ takes values in the affine subspace 
\begin{equation}
\mathcal{T}'_1 = \{ u [e_3 \otimes e^2 \wedge e^3] + v [e_3 \otimes e^3 \wedge e^1] + [e_3 \otimes e^1 \wedge e^2] \} \subset \mathcal{T}_1
\label{eq:1_214}
\end{equation}

Let us find the action of $G_1$ on $\mathcal{T}_1$. Using \eqref{eq:1_36}, we find that
\begin{eqnarray*}
\rho(g) [e_3 \otimes e^2 \wedge e^3] &=& \cos\varphi [e_3 \otimes e^2 \wedge e^3] - \sin\varphi [e_3 \otimes e^3 \wedge e^1]
\\
\rho(g) [e_3 \otimes e^3 \wedge e^1] &=& \sin\varphi [e_3 \otimes e^2 \wedge e^3] + \cos\varphi [e_3 \otimes e^3 \wedge e^1]
\\
\rho(g) [e_3 \otimes e^1 \wedge e^2] &=& (-b_1 \cos\varphi - b_2 \sin\varphi) [e_3 \otimes e^2 \wedge e^3] +
\\
&& (b_1 \sin\varphi - b_2 \cos\varphi)[e_3 \otimes e^3 \wedge e^1] + [e_3 \otimes e^1 \wedge e^2]
\end{eqnarray*} 
From this follows that $\rho(g)$ maps $\mathcal{T}'_1$ into itself, and moreover, 
\begin{multline}
\rho(g) \big(u [e_3 \otimes e^2 \wedge e^3] + v [e_3 \otimes e^3 \wedge e^1] + [e_3 \otimes e^1 \wedge e^2]\big) = 
\\
\{(u-b_1)\cos\varphi+(v-b_2)\sin\varphi\}[e_3 \otimes e^2 \wedge e^3] + 
\\
\{-(u-b_1)\sin\varphi+(v-b_2)\cos\varphi\}[e_3 \otimes e^3 \wedge e^1] + [e_3 \otimes e^1 \wedge e^2]
\label{eq:1_215}
\end{multline}

Thus, we can make the Cartan reduction and pass to the $G_2$-structure $B_2 \to M$. We take 
\begin{equation}
\tau_1 =  0 \cdot [e_3 \otimes e^2 \wedge e^3] + 0 \cdot [e_3 \otimes e^3 \wedge e^1] + [e_3 \otimes e^1 \wedge e^2] \in \mathcal{T}_1
\label{eq:1_216}
\end{equation}
and set  
\begin{equation}
B_2 = \{\xi \mid  \mathcal{C}(\xi) = \tau_1\}
\label{eq:1_217}
\end{equation}
is the total space of a principal $G_2$-subbundle of $B_1$, and 
\begin{equation}
G_2 = \{ g \in G_1 \mid \rho(g)\tau_1 = \tau_1 \} 
=\left \{
\left(
\begin{array}{ccc}
\cos\varphi & -\sin\varphi & 0\\
\sin\varphi & \cos\varphi & 0 \\
0 & 0 & 1  
\end{array}
\right)
\right\}
\label{eq:1_218}
\end{equation}

\textbf{Step 3}.
The Lie algebra of $G_2$ is
\begin{equation}
\mathfrak{g}_{2}=\left \{
\left.
\left (
\begin{array}{ccc}
0 & \alpha & 0\\
-\alpha & 0 & 0 \\
0 & 0 & 0 
\end{array}
\right )
\ \right|
\alpha  \in \mathbb{R}
\right\}
\label{eq:1_219}
\end{equation}
and the structure function $\mathcal{C}(\xi) = \tau_1$, for any $\xi \in B_2$,  hence the structure equations are written as follows: 
\begin{equation}
\left (
\begin{array}{c}
d\theta^{1}\\
d\theta^{2}\\
d\theta^{3}
\end{array}
\right )
=
\left (
\begin{array}{ccc}
0 & \alpha &  0\\
-\alpha & 0 & 0 \\
0 & 0 & 0 
\end{array}
\right )
\wedge
\left (
\begin{array}{c}
\theta^{1}\\
\theta^{2}\\
\theta^{3}
\end{array}
\right )
+
\left (
\begin{array}{ccc}
T_{23}^{1} & T_{31}^{1} &T_{12}^{1} \\
T_{23}^{2} & T_{31}^{2} &T_{12}^{2} \\
0  & 0 & 1 
\end{array}
\right )
\left (
\begin{array}{c}
\theta^{2}\wedge\theta^{3}\\
\theta^{3}\wedge\theta^{1}\\
\theta^{1}\wedge\theta^{2}
\end{array}
\right )
\label{eq:1_220}
\end{equation}
The basis of $\mathfrak{g}_2$ is 
\begin{equation}
\mathcal{E}_1 = e_1 \otimes e^2 - e_2 \otimes e^1;
\label{eq:1_221}
\end{equation}
and $\mathcal{E}_1 \otimes e^a$, $a=\overline{1,3}$, is the basis of $\mathfrak{g}_2 \otimes (\mathbb{R}^3)^*$. 
Then
$\delta: \mathfrak{g}_2 \otimes(\mathbb{R}^3)^* \to \Lambda^2(\mathbb{R}^3) \otimes \mathbb{R}^3$ acts on the basis elements as follows
\begin{equation}
\begin{cases}
\mathcal{E}_1 \otimes e^1 \mapsto  e_1 \otimes e^1 \wedge  e^2,
\quad
\mathcal{E}_1 \otimes e^2 \mapsto  - e_2 \otimes e^1 \wedge  e^2,
\\
\mathcal{E}_1 \otimes e^3 \mapsto   e_1 \otimes e^2 \wedge  e^3 + e_2 \otimes e^3 \wedge e^1,
\end{cases} 
\label{eq:1_222}
\end{equation}
Hence follows immediately that $\delta$ \textit{is a monomorphism}.

From \eqref{eq:1_222} we get that
\begin{equation}
\mathcal{T}_2 = 
\frac{\Lambda^2(\mathbb{R}^3) \otimes \mathbb{R}^3}{\delta(\mathfrak{g}_2\otimes(\mathbb{R}^3)^*)},
\label{eq:1_223}
\end{equation}
is spanned by 
\begin{align*}
&&[e_1 \otimes e^2 \wedge e^3] - [e_2 \otimes e^3 \wedge e^1], [e_1 \otimes e^3 \wedge e^1], [e_2 \otimes e^2 \wedge e^3], 
\\
&&[e_3 \otimes e^2 \wedge e^3], [e_3 \otimes e^3 \wedge e^1], [e_3 \otimes e^1 \wedge e^2]
\label{eq:1_224}
\end{align*}
and so is six-dimensional. 

However,  by construction of $B_2$, the structure function $\mathcal{C}$ takes values in the affine subspace 
\begin{equation}
\begin{split}
\mathcal{T}'_2 = \{ u ([e_1 \otimes e^2 \wedge e^3] - [e_2 \otimes e^3 \wedge e^1]) + v [e_1 \otimes e^3 \wedge e^1] + 
\\
w [e_2 \otimes e^2 \wedge e^3] + [e_3 \otimes e^1 \wedge e^2] \} \subset \mathcal{T}_2
\label{eq:1_225}
\end{split}
\end{equation}
Hence follows that the structure equations have the following form 
\begin{equation}
\left (
\begin{array}{c}
d\theta^{1}\\
d\theta^{2}\\
d\theta^{3}
\end{array}
\right )
=
\left (
\begin{array}{ccc}
0 & \alpha &  0\\
-\alpha & 0 & 0 \\
0 & 0 & 0 
\end{array}
\right )
\wedge
\left (
\begin{array}{c}
\theta^{1}\\
\theta^{2}\\
\theta^{3}
\end{array}
\right )
+
\left (
\begin{array}{ccc}
u  &  v & 0 \\
w  & - u & 0 \\
0  & 0 & 1 
\end{array}
\right )
\left (
\begin{array}{c}
\theta^{2}\wedge\theta^{3}\\
\theta^{3}\wedge\theta^{1}\\
\theta^{1}\wedge\theta^{2}
\end{array}
\right )
\label{eq:1_226}
\end{equation}

Let us now prove that the form $\alpha$ which satisfies \eqref{eq:1_226} is unique, In $\Lambda^2(\mathbb{R}^3) \otimes \mathbb{R}^3$ consider subspace $N$ spanned by  
\begin{align*}
&&e_1 \otimes e^2 \wedge e^3 - e_2 \otimes e^3 \wedge e^1, e_1 \otimes e^3 \wedge e^1, e_2 \otimes e^2 \wedge e^3, 
\\
&&e_3 \otimes e^2 \wedge e^3, e_3 \otimes e^3 \wedge e^1, e_3 \otimes e^1 \wedge e^2.
\end{align*}
It is clear that we have the direct sum
\begin{equation}
\Lambda^2(\mathbb{R}^3) \otimes \mathbb{R}^3 = N \oplus \delta(\mathfrak{g}_2\otimes(\mathbb{R}^3)^*)
\label{eq:1_227}
\end{equation}
and $\{T^a_{bc}\}$ from \eqref{eq:1_226}  takes values in $N$,
If we have another $\hat\alpha$ and the corresponding torsion $\{\hat T^a_{bc}\}$ which satisfy \eqref{eq:1_226}, then $\{\hat T^a_{bc}\}$ also take values in $N$, so the same is true for  $\{\hat T^a_{bc} - T^a_{bc}\}$.
However, from \eqref{eq:1_31} it follows that $\hat T^a_{bc} - T^a_{bc}  = \delta(\mu^a_{bc})$. As we have the direct sum decomposition \eqref{eq:1_227}, we obtain that  $\hat T^a_{bc} - T^a_{bc} = 0$ and $\delta(\mu^a_{bc})=0$. But $\delta$ is a monomorphism (see \eqref{eq:1_222}), hence $\mu^a_{bc} = 0$, and so $\hat\omega^a_b = \hat\omega^a_b$ (see \eqref{eq:1_31}). Thus $\hat\alpha = \alpha$. 

To finish the proof of the theorem it is sufficient to prove that in \eqref{eq:1_226} $v = w$. 
From \eqref{eq:1_226} we get $d\theta^3 = \theta^1 \wedge \theta^2$, then, again using \eqref{eq:1_226}, we
obtain
\begin{equation}
0 = d\theta^1 \wedge \theta^2 - \theta^1 \wedge d\theta^2 = v \theta^3 \wedge \theta^1 \wedge \theta^2 - w \theta^1 \wedge \theta^2 \wedge \theta^3 = (v-w)\theta^1 \wedge \theta^2 \wedge \theta^3. 
\label{eq:1_228}
\end{equation}
Now we set $u = a_1$, $v = w = a_2$ and from \eqref{eq:1_226} get the structure equations \eqref{eq:1_15}.

Thus we have proved that for any contact sub-Riemannian surface $\mathcal S = (M,\Delta,\langle \cdot,\cdot \rangle)$ there exists an $SO(2)$-structure on $M$ and a unique pseudoconnection form $\alpha$ such that    
the structure equations \eqref{eq:1_15} hold true. The uniqueness of the $SO(2)$-structure $B_2$ will be proved later, in Corollary~\ref{cor:1} of Proposition~\ref{prop:2}.

\subsubsection{The functions $a_1$, $a_2$ and $1$-form $\alpha$ in terms of structure functions of a local frame}

Let $\eta = \{\eta^a\}$ be a coframe field in a neighborhood $U$ of $p \in M$ which is a section of $B_2 \to M$. Let $d\eta^a = C^a_{bc} \eta^b \wedge \eta^c$ be the corresponding structure equations. Then, for $\bar\eta^a = d\pi^*\eta^a$, we have $d\bar{\eta}^a = \bar{C}^a_{bc}  \bar{\eta}^b \wedge \bar{\eta}^c$, where $\bar{C}^a_{bc} = \pi^* C^a_{bc} = C^a_{bc} \circ \pi : \pi^{-1}(U) \to \mathbb{R}$.

Let $\psi : \pi^{-1}(U) \to U \times SO(2)$ be a local trivialization of $\pi : B_2 \to M$ determined by $\eta$, then  
\begin{equation}
\psi^{-1} (p,g(\varphi)) = g(\varphi)^{-1} \eta_p,  
\end{equation}
where 
\begin{equation}
g(\varphi)=
\left (
\begin{array}{ccc}
\cos\varphi & -\sin\varphi & 0 \\
\sin\varphi & \cos\varphi & 0 \\
0 & 0 & 1
\end{array}
\right ),\qquad
\eta_p  = \left( \begin{array}{c} \eta^1_p \\ \eta^2_p \\ \eta^3_p \end{array} \right).
\label{eq:1_16}
\end{equation}

\begin{proposition}
\label{prop:2}
\emph{a)} If a coframe field $\eta = \{\eta^a\}$ is a local section of $B_2 \to M$, then $d\eta^3 = \eta^1 \wedge \eta^2$.  

\emph{b)} The form $\alpha$ is expressed in terms of $\bar C^a_{bc}$ as follows: 
\begin{equation}
\label{eq:1_90}
\alpha = d\varphi + \bar C^{1}_{12}\bar\eta^{1}+\bar C^{2}_{12}\bar\eta^{2}-\frac{1}{2}\left (\bar C^{1}_{23}+\bar C^{2}_{31}\right )\bar\eta^{3}
\end{equation}

\emph{c)} The functions $a_{1}$ and $a_{2}$ are expressed in terms of $\bar C^a_{bc}$ as follows:
\begin{eqnarray}
a_{1}&=&
\cos 2\varphi \left (\dfrac{\bar C^{1}_{23}-\bar C^{2}_{31}}{2}\right )+\sin 2\varphi\left (\bar C^{1}_{31}\right )
\\
a_{2}&=& -\sin 2\varphi \left (\dfrac{\bar C^{1}_{23}-\bar C^{2}_{31}}{2}\right )+\cos 2\varphi \left (\bar C^{1}_{31}\right )
\end{eqnarray}

\end{proposition}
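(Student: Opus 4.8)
The plan is to push everything down to the base via the trivialization \eqref{eq:1_16} and then read off $\alpha$, $a_1$, $a_2$ by matching coefficients in the structure equations \eqref{eq:1_15}. First I would combine the defining relation \eqref{eq:1_12} for the tautological forms with $\psi^{-1}(p,g(\varphi)) = g(\varphi)^{-1}\eta_p$ to obtain the explicit expressions
\[
\theta^1 = \cos\varphi\,\bar\eta^1 + \sin\varphi\,\bar\eta^2,\qquad
\theta^2 = -\sin\varphi\,\bar\eta^1 + \cos\varphi\,\bar\eta^2,\qquad
\theta^3 = \bar\eta^3
\]
on $\pi^{-1}(U)$. Part \emph{a)} is then immediate: a section $s : U \to B_2$ satisfies $s^*\theta^a = \eta^a$, so pulling back the relation $d\theta^3 = \theta^1\wedge\theta^2$ (read off from \eqref{eq:1_15}) along $s$ gives $d\eta^3 = \eta^1\wedge\eta^2$; equivalently this is the $\varphi = 0$ case of the computation that follows.

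For parts \emph{b)} and \emph{c)} I would use that $\alpha$ is a genuine connection form, hence vertically normalized, so it must have the shape $\alpha = d\varphi + p\,\bar\eta^1 + q\,\bar\eta^2 + r\,\bar\eta^3$ with horizontal part to be determined. Differentiating the expressions for $\theta^1,\theta^2$ above gives
\[
d\theta^1 = d\varphi\wedge\theta^2 + \cos\varphi\,d\bar\eta^1 + \sin\varphi\,d\bar\eta^2,\qquad
d\theta^2 = -d\varphi\wedge\theta^1 - \sin\varphi\,d\bar\eta^1 + \cos\varphi\,d\bar\eta^2 .
\]
I would then substitute $d\bar\eta^a = \bar C^a_{bc}\bar\eta^b\wedge\bar\eta^c$ and re-expand the basis $\{\bar\eta^i\wedge\bar\eta^j\}$ in the rotated basis $\{\theta^2\wedge\theta^3,\theta^3\wedge\theta^1,\theta^1\wedge\theta^2\}$, noting that $\bar\eta^1\wedge\bar\eta^2 = \theta^1\wedge\theta^2$. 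Comparing with \eqref{eq:1_15}, the $d\varphi$-terms cancel against $\alpha\wedge\theta^2$ and $-\alpha\wedge\theta^1$, and matching the three coefficients in each of the two equations yields a linear system for $p,q,r,a_1,a_2$. Solving it produces $p = \bar C^1_{12}$, $q = \bar C^2_{12}$, $r = -\tfrac12(\bar C^1_{23}+\bar C^2_{31})$, together with the $2\varphi$-dependent expressions for $a_1$ and $a_2$.

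The main obstacle is the $\varphi$-bookkeeping combined with the fact that the system is overdetermined: each of $d\theta^1$ and $d\theta^2$ supplies its own expression for $a_2$ and for the $\bar\eta^3$-coefficient of $\alpha$, and these must agree. Consistency hinges on the identity $\bar C^1_{31} = \bar C^2_{23}$, which I would derive by differentiating the relation $d\bar\eta^3 = \bar\eta^1\wedge\bar\eta^2$ of part \emph{a)}:
\[
0 = d\bar\eta^1\wedge\bar\eta^2 - \bar\eta^1\wedge d\bar\eta^2 = (\bar C^1_{31}-\bar C^2_{23})\,\bar\eta^1\wedge\bar\eta^2\wedge\bar\eta^3 .
\]
This is precisely the base-level analog of the identity $v=w$ established in \eqref{eq:1_228}. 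With this symmetry in hand, the half-angle identities $\cos^2\varphi-\sin^2\varphi = \cos 2\varphi$ and $2\sin\varphi\cos\varphi = \sin 2\varphi$ collapse the coefficients into the stated rotated combinations, the two expressions for $a_2$ coincide, and the horizontal coefficients of $\alpha$ come out $\varphi$-independent. Finally, since the connection form and torsion satisfying \eqref{eq:1_15} are unique (as shown in the proof of the theorem), the $\alpha$, $a_1$, $a_2$ so obtained are the genuine invariants, which completes the proof.
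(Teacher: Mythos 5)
Your proposal is correct and follows essentially the same route as the paper: pass to the trivialization $\theta^1=\cos\varphi\,\bar\eta^1+\sin\varphi\,\bar\eta^2$, etc., expand the structure equations in the $\bar\eta$-basis, and match coefficients against $d\bar\eta^a=\bar C^a_{bc}\,\bar\eta^b\wedge\bar\eta^c$, with the overdetermination resolved by the identity $\bar C^1_{31}=\bar C^2_{23}$ (which the paper also notes, derived from $d\bar\eta^3=\bar\eta^1\wedge\bar\eta^2$ exactly as you do). The only cosmetic differences are that you run the substitution in the opposite direction (expressing $d\theta^a$ via $d\bar\eta^a$ rather than vice versa) and justify the absence of an extra $d\varphi$-component of $\alpha$ by the connection-form normalization instead of reading it off from the coefficient matching.
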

\begin{proof}
If a coframe field $\eta = \{\eta^a\}$ is a local section of $B_2 \to M$, then 
the equations  \eqref{eq:1_12} are written as follows:  
\begin{equation}
\label{eq:1_17}
\begin{cases}
\theta^{1}= \cos\varphi \bar\eta^{1} + \sin\varphi\bar\eta^{2}\\
\theta^{2}= -\sin\varphi \bar\eta^{1} + \cos\varphi\bar\eta^{2}\\
\theta^{3}= \bar\eta^{3}
\end{cases}
\iff
\begin{cases}
\bar\eta^{1}= \cos\varphi \theta^{1} - \sin\varphi\theta^{2}\\
\bar\eta^{2}= \sin\varphi \theta^{1} + \cos\varphi\theta^{2}\\
\bar\eta^{3}= \theta^{3}
\end{cases}
\end{equation}
Then
\begin{equation}
\label{eq:1_18}
\begin{cases}
\theta^{2}\wedge\theta^{3}=\sin\varphi\bar\eta^{3}\wedge\bar\eta^{1}+\cos\varphi\bar\eta^{2}\wedge\bar\eta^{3}\\
\theta^{3}\wedge\theta^{1}=\cos\varphi\bar\eta^{3}\wedge\bar\eta^{1}-\sin\varphi\bar\eta^{2}\wedge\bar\eta^{3}\\
\theta^1 \wedge \theta^2 = \bar\eta^1 \wedge \bar\eta^2 
\end{cases}
\end{equation}
\begin{equation}\label{eq:1_19}
\begin{cases}
d\bar\eta^{1}=-\sin\varphi d\varphi\wedge\theta^{1} + \cos\varphi d\theta^{1}-\cos\varphi d\varphi \wedge\theta^{2} - \sin\varphi d\theta^{2}\\
d\bar\eta^{2}=\cos\varphi d\varphi\wedge\theta^{1} + \sin\varphi d\theta^{1}-\sin\varphi d\varphi \wedge\theta^{2} + \cos\varphi d\theta^{2}\\
d\bar\eta^{3}=d\theta^{3}
\end{cases}
\end{equation}
To \eqref{eq:1_19} we substitute $d\theta^a$ from  \eqref{eq:1_15} and then $\theta^a$ from \eqref{eq:1_17} and use \eqref{eq:1_18}, finally we arrive at  
\begin{equation}
\label{eq:1_20}
\begin{cases}
d\bar\eta^{1}=&(\alpha-d\varphi) \wedge \bar\eta^{2}+
 \left (a_{1}\cos 2\varphi - a_{2}\sin 2\varphi\right )\bar\eta^{2}\wedge\bar\eta^{3}+
\\
&\left (a_{1}\sin 2\varphi + 
a_{2}\cos 2\varphi\right )\bar\eta^{3}\wedge\bar\eta^{1}
\\
d\bar\eta^{2}=&(d\varphi - \alpha) \wedge\bar\eta^{1} +
\left (a_{1}\sin 2\varphi + a_{2}\cos 2\varphi\right )\bar\eta^{2}\wedge\bar\eta^{3} +
\\
& \left (-a_{1}\cos 2\varphi + a_{2}\sin 2\varphi\right )\bar\eta^{3}\wedge\bar\eta^{1}
\\
d\bar\eta^{3}=&\bar\eta^{1}\wedge\bar\eta^{2}
\end{cases}
\end{equation}
Let us set $\alpha - d\varphi = p_1 \bar\eta^1 + p_2 \bar\eta^2 + p_3 \bar\eta^3 + p_4 d\varphi$ and substitute to \eqref{eq:1_20}. We get
\begin{equation}
\label{eq:1_21}
\begin{cases}
d\bar\eta^{1}=&
\left(a_{1}\cos 2\varphi - a_{2}\sin 2\varphi - p_3 \right)\bar\eta^{2}\wedge\bar\eta^{3}+
\left (a_{1}\sin 2\varphi + a_{2}\cos 2\varphi\right )\bar\eta^{3}\wedge\bar\eta^{1} + 
\\
& p_1 \bar\eta^1 \wedge \bar\eta^2 + p_4 d\varphi \wedge \bar\eta^2
\\
d\bar\eta^{2} =& \left (a_{1}\sin 2\varphi + a_{2}\cos 2\varphi\right )\bar\eta^{2}\wedge\bar\eta^{3} +
\left (-a_{1}\cos 2\varphi + a_{2}\sin 2\varphi - 
p_3 \right)\bar\eta^{3}\wedge\bar\eta^{1} + 
\\
&p_2 \bar\eta^1 \wedge \eta^2 + p_4 \bar\eta^1 \wedge d\varphi
\\
d\bar\eta^{3}=&\bar\eta^{1}\wedge\bar\eta^{2}
\end{cases}
\end{equation}
At the same time, $d\bar{\eta}^a = \bar{C}^a_{bc}  \bar{\eta}^b \wedge \bar{\eta}^c$, hence we get that $p_4 =0$ and $\bar C^a_{bc}$ are expressed as follows:
\begin{eqnarray}
&&
\bar C^1_{23}  = a_{1}\cos 2\varphi - a_{2}\sin 2\varphi - p_3 
\label{eq:1_22_1}
\\
&&
\bar C^1_{31}  = a_{1}\sin 2\varphi + a_{2}\cos 2\varphi
\label{eq:1_22_2}
\\
&&
\bar C^1_{12}  = p_1 
\label{eq:1_22_3}
\\
&&
\bar C^2_{23}  = a_{1}\sin 2\varphi + a_{2}\cos 2\varphi
\label{eq:1_22_4}
\\
&&
\bar C^2_{31}  = -a_{1}\cos 2\varphi + a_{2}\sin 2\varphi - p_3
\label{eq:1_22_5}
\\
&&
\bar C^2_{12}  = p_2 
\label{eq:1_22_6}
\\
&&
\bar C^3_{23}  = 0 
\label{eq:1_22_7}
\\
&&
\bar C^3_{31}  = 0 
\label{eq:1_22_8}
\\
&&
\bar C^3_{12}  = 1 
\label{eq:1_22_9}
\label{eq:1_22}
\end{eqnarray}

From \eqref{eq:1_22_7}--\eqref{eq:1_22_9} we get claim a).

The equations \eqref{eq:1_22_3}, \eqref{eq:1_22_6}, and the sum of equations \eqref{eq:1_22_1}, \eqref{eq:1_22_5} give us $p_1$, $p_2$, and $p_3 = -\frac{1}{2}\left (\bar C^{1}_{23}+\bar C^{2}_{31}\right)$, thus we prove claim b).

The equations \eqref{eq:1_22_1}, \eqref{eq:1_22_2} with $p_3$ substituted give us claim c),

Finally note that \eqref{eq:1_22_2}, \eqref{eq:1_22_4} imply that $\bar C^1_{31} = \bar C^2_{23}$, this also can be proved in the following way. We have $d\bar\eta^3 = \bar\eta^1 \wedge \bar\eta^2$. From this it follows that $0 = d\bar\eta^1 \wedge \bar\eta^2 - \bar\eta^1 \wedge d\bar\eta^2$, hence we obtain $\bar C^1_{31} = \bar C^2_{23}$.
\end{proof}

\begin{corollary}
\label{cor:1}
The $SO(2)$-structure $B_2 \to M$, where $B_2$ is a $SO(2)$-principal subbundle of $B_0$ such that the tautological forms $\theta^a$ on $B_2$ satisfy structure equations~\eqref{eq:1_15} is unique. 
\end{corollary}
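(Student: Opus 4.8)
The plan is to pin down $B_2$ by an intrinsic, coframe-level condition and then to show that this condition determines each fibre as a single $SO(2)$-orbit inside $(B_0)_p$. The starting point is the third row of the structure equations \eqref{eq:1_15}, namely $d\theta^3=\theta^1\wedge\theta^2$. For any local section $s\colon U\to B_2$, writing $\eta^a=s^*\theta^a$ and using $d(s^*\theta^3)=s^*(d\theta^3)$ together with $s^*(\theta^1\wedge\theta^2)=\eta^1\wedge\eta^2$, this pulls back to the identity of $2$-forms
\begin{equation}
d\eta^3=\eta^1\wedge\eta^2\qquad\text{on }U.
\label{eq:cor1_key}
\end{equation}
The decisive point is that \eqref{eq:cor1_key} uses nothing about $B_2$ beyond \eqref{eq:1_15} and the fact that the structure group is $SO(2)$; hence it holds verbatim for the local sections of \emph{any} $SO(2)$-subbundle $\hat B_2\subset B_0$ whose tautological forms satisfy \eqref{eq:1_15}.

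The heart of the proof is the pointwise claim that, at each $p\in M$, any two adapted coframe fields near $p$ that satisfy \eqref{eq:cor1_key} differ by an $SO(2)$-valued gauge transformation. To establish this I would take two such coframe fields $\eta,\hat\eta$ and write $\hat\eta=g\cdot\eta$ with $g$ valued in $G_0$ of the form \eqref{eq:1_201}, with parameters $\varphi,b_1,b_2,c$. Restricting \eqref{eq:cor1_key} to $\Delta=\ker\eta^3$ annihilates every term containing $\eta^3$ and leaves a relation between $(d\eta^3)|_\Delta$ and the scaling $c$; since $(\eta^1\wedge\eta^2)|_\Delta$ is invariant under the rotation part of $g$ and $(d\eta^3)|_\Delta\neq0$ by the contact hypothesis, this fixes $c$ uniquely, so $\hat\eta^3=\eta^3$ and therefore $d\hat\eta^3=d\eta^3$. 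Feeding this back into \eqref{eq:cor1_key} for both coframes gives $\eta^1\wedge\eta^2=d\eta^3=\hat\eta^1\wedge\hat\eta^2$; writing $\hat\eta^1,\hat\eta^2$ as the rotation of $\eta^1,\eta^2$ plus multiples $b_1\eta^3,b_2\eta^3$, the part of this identity proportional to $\eta^3$ is a linear system in $b_1,b_2$ with nonzero determinant, forcing $b_1=b_2=0$. Thus $g$ is $SO(2)$-valued, and the values $\eta(p),\hat\eta(p)$ lie in one $SO(2)$-orbit.

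With the pointwise claim in hand the corollary follows quickly. Take local sections $s$ of $B_2$ and $\hat s$ of a competitor $\hat B_2$ near $p$; both satisfy \eqref{eq:cor1_key}, so by the claim $\hat s=g\cdot s$ with $g$ valued in $SO(2)$, whence $\hat s(p)\in SO(2)\cdot s(p)=(B_2)_p$. As $\hat s$ ranges over sections through the various points of $(\hat B_2)_p$, this yields $(\hat B_2)_p\subseteq(B_2)_p$, and by symmetry equality, for every $p$. Therefore $\hat B_2=B_2$, which is the asserted uniqueness.

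I expect the main obstacle to be the pointwise orbit computation, and within it the careful use of the contact hypothesis: one must verify that $(d\eta^3)|_\Delta$ is a nonzero multiple of the sub-Riemannian area form, so that the normalization fixing $c$ is both available and unique, and one must track with consistent orientation and sign conventions that the $\eta^3$-part of $\eta^1\wedge\eta^2=\hat\eta^1\wedge\hat\eta^2$ really is a nondegenerate system in $b_1,b_2$. The pullback step producing \eqref{eq:cor1_key} and the final assembly of fibres into orbits are routine by comparison.
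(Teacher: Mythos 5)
Your proposal is correct and follows essentially the same route as the paper: both pull back the third structure equation to get $d\eta^3=\eta^1\wedge\eta^2$ (this is Proposition~\ref{prop:2}\,a) in the paper), fix the scaling of $\eta^3$ by restricting to $\Delta$ and using that the induced area form is nonzero, and then kill the $b_1,b_2$ shear terms by comparing $\eta^1\wedge\eta^2$ with $\hat\eta^1\wedge\hat\eta^2$, leaving only the $SO(2)$ rotation freedom. The only cosmetic difference is that you phrase the area-form invariance via the rotation part of $g$, whereas the paper says directly that both coframes are adapted and hence induce the same $\Omega$ on $\Delta$.
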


\begin{proof}
Let $B_2$ and $\hat B_2$ be $SO(2)$-principal subbundles of $B_0$ such that the tautological forms satisfy structure equations~\eqref{eq:1_15}. Take local sections  $\eta^a$ and $\hat\eta^a$ of $B_2$ and $\hat B_2$, respectively. 

By Proposition~\ref{prop:2} a), we have  
\begin{equation}
d\eta^3 = \eta^1 \wedge \eta^2 \text{ and } d\hat{\eta}^3 = \hat{\eta}^1 \wedge \hat{\eta}^2
\label{eq:200_1}
\end{equation} 

Let $\Omega$ be the area form on $\Delta$ determined by the metric $\langle \cdot,\cdot \rangle$.
Since $\eta^a$ and $\hat\eta^a$ are sections of $B_0$, we have 
$$
\eta^1 \wedge \eta^2 |_\Delta = \Omega = \hat\eta^1 \wedge \hat\eta^2 |_\Delta
$$
By the same reason, 
we have $\hat\eta^3 = e^f \eta^3$, hence $d\hat\eta^3 = e^f df \wedge \eta^3 +  e^f d\eta^3$.  We restrict it to $\Delta$ and from $d\hat\eta^3 |_\Delta = d\eta^3 |_\Delta$ get that $e^f = 1$. Hence $\hat\eta^3 = \eta^3$.
Now 
\begin{align}
\hat\eta^1 &= \cos\varphi \eta^1 - \sin\varphi \eta^2 + a \eta^3 
\\
\hat\eta^2 &= \sin\varphi \eta^1 + \cos\varphi \eta^2 + b \eta^3 
\label{eq:3}
\end{align}
But 
$$
\hat\eta^1 \wedge \hat \eta^2 = d\hat\eta^3 = d\eta^3 = \eta^1 \wedge \eta^2,
$$
so one can easily prove that $a=b=0$.
\end{proof}

\begin{corollary}
The function 
\begin{equation}
\label{Eq:formula-M}
\bar{\mathcal{M}}=\left (a_{1}\right )^{2}+\left (a_{2}\right )^{2}=\left (\dfrac{\bar C^{1}_{23}-\bar C^{2}_{31}}{2}\right )^{2}+\left (\bar C^{1}_{31}\right )^{2}
\end{equation}
is a pullback of a function $\mathcal{M} : M \to \mathbb{R}$, i.\,e. $\bar{\mathcal{M}} = \mathcal{M} \circ \pi$, where  
\begin{equation}
 \mathcal{M}=\left (\dfrac{C^{1}_{23}-C^{2}_{31}}{2}\right )^{2}+\left (C^{1}_{31}\right )^{2}.
\end{equation}
\end{corollary}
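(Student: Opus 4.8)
The plan is to read off the transformation law of $(a_1,a_2)$ from Proposition~\ref{prop:2} c) and to recognize it as a rotation. Writing $A = \tfrac{1}{2}\left(\bar C^{1}_{23}-\bar C^{2}_{31}\right)$ and $B=\bar C^{1}_{31}$, the two formulas of claim c) say precisely that
\begin{equation*}
\begin{pmatrix} a_1 \\ a_2 \end{pmatrix}
=
\begin{pmatrix} \cos 2\varphi & \sin 2\varphi \\ -\sin 2\varphi & \cos 2\varphi \end{pmatrix}
\begin{pmatrix} A \\ B \end{pmatrix},
\end{equation*}
that is, $(a_1,a_2)$ is obtained from $(A,B)$ by the orthogonal rotation through the angle $-2\varphi$, where $\varphi$ is the fibre coordinate of $B_2\to M$. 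Since an orthogonal transformation preserves the Euclidean norm, I immediately obtain $a_1^2+a_2^2 = A^2+B^2$, which is exactly the asserted identity $\bar{\mathcal{M}} = \left(\tfrac{\bar C^{1}_{23}-\bar C^{2}_{31}}{2}\right)^2+\left(\bar C^{1}_{31}\right)^2$.

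It then remains to deduce that $\bar{\mathcal{M}}$ descends to $M$. The key point is that $a_1$ and $a_2$ are canonical functions on $B_2$, being the torsion coefficients appearing in the intrinsic structure equations~\eqref{eq:1_15}; hence $\bar{\mathcal{M}}=a_1^2+a_2^2$ is a well-defined function on $B_2$ with no reference to a section. On the other hand, $A$ and $B$ are built from $\bar C^a_{bc}=C^a_{bc}\circ\pi$, the pullbacks of the structure functions of the chosen local section $\eta$, and are therefore constant along the fibres of $\pi$. The identity $a_1^2+a_2^2=A^2+B^2$ thus exhibits $\bar{\mathcal{M}}$ as a fibre-constant function on $\pi^{-1}(U)$, equal there to the pullback of $\mathcal{M} = \left(\tfrac{C^{1}_{23}-C^{2}_{31}}{2}\right)^2+\left(C^{1}_{31}\right)^2$, a function on $U\subset M$. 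This proves both the explicit formula and $\bar{\mathcal{M}}=\mathcal{M}\circ\pi$ locally.

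The only subtlety to address is that $\mathcal{M}$ has been written through a particular section $\eta$, whereas the statement asserts a single function on all of $M$; the main thing to verify is therefore that these local descriptions are section-independent and glue. This is forced by the previous step: since $\bar{\mathcal{M}}$ is intrinsic to $B_2$, its value at any $\xi\in\pi^{-1}(p)$ is independent of the section and equals $\mathcal{M}(p)$, so the expression $\left(\tfrac{C^{1}_{23}-C^{2}_{31}}{2}\right)^2+\left(C^{1}_{31}\right)^2$ computed from any adapted section through $p$ yields the same number, and the locally defined $\mathcal{M}$'s agree on overlaps and patch into a global $\mathcal{M}:M\to\mathbb{R}$. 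I expect no genuine difficulty here: the entire content of the corollary is the rotation-invariance of the Euclidean norm combined with the fibre-constancy of the $\bar C^a_{bc}$, so the proof reduces to the one-line computation $a_1^2+a_2^2=A^2+B^2$ together with this bookkeeping.
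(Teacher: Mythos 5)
Your proof is correct and follows exactly the route the paper intends: the corollary is stated as an immediate consequence of Proposition~\ref{prop:2}\,c), whose two formulas express $(a_1,a_2)$ as a rotation of $\bigl(\tfrac{1}{2}(\bar C^1_{23}-\bar C^2_{31}),\,\bar C^1_{31}\bigr)$ through $-2\varphi$, so the sum of squares is fibre-constant and descends via $\bar C^a_{bc}=C^a_{bc}\circ\pi$. Your additional remark on section-independence and gluing is a harmless (and correct) elaboration of the same point.
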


From Corollary~\ref{cor:1} it follows that the $SO(2)$-structure $B_2$ is uniquely determined by the sub-Riemannian surface $\mathcal{S}$.  
As the coframe field $\left\{\theta^{1}, \theta^{2}, \theta^{3}, \alpha\right \}$ on $B_2$ is uniquely determined, we see that the functions $a_1, a_2 : B_2 \to \mathbb{R}$ as well as the form $\alpha$ are uniquely determined by $\mathcal S$. Thus we get  
\begin{corollary} 
The function $\mathcal{M}$ 
is an invariant of the sub-Riemannian surface $\mathcal{S}$.
\end{corollary}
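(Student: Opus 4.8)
The plan is to deduce the invariance of $\mathcal{M}$ from the uniqueness, already established, of every object entering its definition, together with the naturality of the whole construction under isomorphisms of sub-Riemannian surfaces. First I would record that, by Theorem~\ref{Th:Theorema-1} and Corollary~\ref{cor:1}, the $SO(2)$-structure $B_2 \to M$ and the coframe field $(\theta^1, \theta^2, \theta^3, \alpha)$ on $B_2$ are canonically and uniquely associated to $\mathcal S$: the tautological forms $\theta^a$ are intrinsic to any $G$-structure, and the connection form $\alpha$ was shown to be the unique $1$-form making the structure equations \eqref{eq:1_15} hold. Consequently the torsion functions $a_1, a_2 : B_2 \to \mathbb R$, being uniquely determined by \eqref{eq:1_15}, are canonical functions on $B_2$, and so is $\bar{\mathcal M} = a_1^2 + a_2^2$. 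By the preceding corollary $\bar{\mathcal M} = \mathcal M \circ \pi$, so $\mathcal M : M \to \mathbb R$ is a function canonically attached to $\mathcal S$, independent of all the choices made along the reduction.

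The substance of the word ``invariant'' is preservation under isomorphism, so the second step is to check naturality. Let $\phi : M \to M'$ be an orientation-preserving diffeomorphism carrying $(\Delta, \langle\cdot,\cdot\rangle)$ to $(\Delta', \langle\cdot,\cdot\rangle')$, i.e. an isomorphism of $\mathcal S$ onto $\mathcal S'$. The differential of $\phi$ induces a bundle map $\Phi$ between coframe bundles by $\xi \mapsto \xi \circ d\phi^{-1}$; since $\phi$ sends adapted coframes to adapted coframes, $\Phi$ restricts to an isomorphism $B_0 \to B_0'$ of $G_0$-structures and intertwines the tautological forms, $\Phi^* \theta'^a = \theta^a$. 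Because $\Phi$ is compatible with the $G_0$-action, it commutes with the structure function and hence with the successive Cartan reductions of Steps~1--3; by the uniqueness in Corollary~\ref{cor:1} it therefore maps $B_2$ onto $B_2'$. The pulled-back coframe $(\Phi^*\theta'^a, \Phi^*\alpha')$ then satisfies the same structure equations \eqref{eq:1_15} on $B_2$, so by the uniqueness of $\alpha$ we get $\Phi^*\alpha' = \alpha$ and $a_i = a_i' \circ \Phi$. This yields $\bar{\mathcal M} = \bar{\mathcal M}' \circ \Phi$, which descends along $\pi$ to $\mathcal M = \mathcal M' \circ \phi$, the required invariance.

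The main obstacle is precisely the naturality of the Cartan reduction: one must verify that the induced map $\Phi$ genuinely carries the reduced bundle $B_2$ to $B_2'$ rather than merely to some $SO(2)$-subbundle of $B_0'$ satisfying \eqref{eq:1_15}. This is where Corollary~\ref{cor:1} is indispensable, since it guarantees that such an $SO(2)$-structure is unique; combined with the $G_0$-equivariance of the structure function \eqref{eq:1_37}, which ensures $\Phi$ commutes with each reduction step, the uniqueness forces $\Phi(B_2) = B_2'$ and closes the argument. The remaining verifications---that $\Phi$ intertwines the tautological forms and that the pulled-back $\alpha'$ solves \eqref{eq:1_15}---are formal consequences of the definitions \eqref{eq:1_11}--\eqref{eq:1_13} and require no new computation.
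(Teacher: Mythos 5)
Your proposal is correct and follows essentially the same route as the paper: the paper deduces the corollary directly from the uniqueness of the $SO(2)$-structure $B_2$ (Corollary~\ref{cor:1}) and of the coframe $\{\theta^1,\theta^2,\theta^3,\alpha\}$, which forces the torsion functions $a_1,a_2$ and hence $\bar{\mathcal M}=a_1^2+a_2^2$ to be canonically determined by $\mathcal S$ and to descend to $\mathcal M$ on $M$. Your second paragraph, checking naturality of the Cartan reduction under isomorphisms of sub-Riemannian surfaces, makes explicit a point the paper leaves implicit (and which is tacitly used later when $V\mathcal M=0$ is asserted for infinitesimal symmetries in the proof of Theorem~\ref{Th:Theorem-3}).
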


\subsubsection{Curvature of a contact sub-Riemannian surface}

Let us write down \eqref{eq:1_15} as follows: 
\begin{eqnarray}
&&
d\theta^{1}=\alpha\wedge\theta^{2}+a_{1}\theta^{2}\wedge\theta^{3}+a_{2}\theta^{3}\wedge\theta^{1}
\label{eq:1_100_1}
\\
&&
d\theta^{2}=-\alpha\wedge\theta^{1}+a_{2}\theta^{2}\wedge\theta^{3}-a_{1}\theta^{3}\wedge\theta^{1}
\label{eq:1_100_2}
\\
&&
d\theta^{3}=\theta^{1}\wedge\theta^{2}
\label{eq:1_100_3}
\end{eqnarray}
Take the exterior differential of \eqref{eq:1_100_1}, then we get
\begin{equation}
\begin{split}
 0 = d\alpha \wedge \theta^2 - \alpha \wedge d\theta^2 + da_1 \wedge \theta^2 \wedge \theta^3 + a_1 d\theta^2 \wedge \theta^3 -  a_1 \theta^2 \wedge d\theta^3 + \\
da_2 \wedge \theta^3 \wedge \theta^1 + a_2 d\theta^3 \wedge \theta^1 -  a_2 \theta^3 \wedge d\theta^1
\end{split}
\label{eq:1_101}
\end{equation}
To \eqref{eq:1_101} we substitute \eqref{eq:1_100_1}--\eqref{eq:1_100_3} and get 
\begin{equation}
d\alpha \wedge \theta^2 + 2 a_1 \alpha \wedge \theta^3 \wedge \theta^1 - 2 a_2 \alpha \wedge \theta^2 \wedge \theta^3 + da_1 \wedge \theta^2 \wedge \theta^3 + da_2 \wedge \theta^3 \wedge \theta^1 = 0.
\label{eq:1_102}
\end{equation}
In the same manner from \eqref{eq:1_100_2} we get 
\begin{equation}
- d\alpha \wedge \theta^1 + 2 a_1 \alpha \wedge \theta^2 \wedge \theta^3 + 2 a_2 \alpha \wedge \theta^3 \wedge \theta^1 + da_2 \wedge \theta^2 \wedge \theta^3 - da_1 \wedge \theta^3 \wedge \theta^1= 0.
\label{eq:1_103}
\end{equation}
Now we consider the expansions: 
\begin{equation}
\begin{cases}
da_1 = a \alpha + A_1 \theta^1 + A_2 \theta^2 + A_3 \theta^3
\\
da_2 = b \alpha + B_1 \theta^1 + B_2 \theta^2 + B_3 \theta^3
\\
d\alpha = P_1 \alpha \wedge \theta^1 +  P_2 \alpha \wedge \theta^2 + P_3 \alpha \wedge \theta^3 
+ X_{23} \theta^2 \wedge \theta^3 + X_{31} \theta^3 \wedge \theta^1 + X_{12} \theta^1 \wedge \theta^2.
\end{cases}
\label{eq:1_104}
\end{equation}
We will express $P_a$ and $X_{ab}$ in terms of $A_a$ and $B_b$. To do it we substitute \eqref{eq:1_104} to \eqref{eq:1_102} and get 
\begin{equation}
\begin{split}
P_1 \alpha \wedge \theta^1 \wedge \theta^2 + P_3 \alpha \wedge \theta^3 \wedge \theta^2 + X_{31} \theta^3 \wedge \theta^1 \wedge \theta^2 + 2 a_1 \alpha \wedge \theta^3 \wedge \theta^1 - 
\\
2 a_2  \alpha \wedge \theta^2 \wedge \theta^3 + 
 a \alpha \wedge \theta^2 \wedge \theta^3 + A_1 \theta^1 \wedge \theta^2 \wedge \theta^3 + 
\\ 
 b \alpha \wedge \theta^3 \wedge \theta^1 + B_2 \theta^2 \wedge \theta^3 \wedge \theta^1 = 0.
\end{split}
\label{eq:1_105}
\end{equation} 
From this we get 
\begin{equation}
P_1 = 0, \quad P_3 - a + 2a_2 = 0, \quad X_{31} + A_1 + B_2 = 0, \quad 2a_1 + b = 0.
\label{eq:1_106}
\end{equation}
In the same manner, substituting \eqref{eq:1_104} to \eqref{eq:1_103}, we get 
\begin{equation}
P_2 = 0, \quad -P_3 - a + 2a_2 = 0, \quad -X_{23} + B_1 - A_2 = 0, \quad 2a_1 + b = 0.
\label{eq:1_107}
\end{equation}
From \eqref{eq:1_106} and \eqref{eq:1_107} we get
\begin{equation}
\begin{split}
P_1 = P_2 = P_3 = 0, \quad a = 2a_2, \quad b = -2 a_1, 
\\
 X_{31} = -A_1 - B_2, \quad X_{23} = B_1 - A_2.
\label{eq:1_108}
\end{split}
\end{equation}
Thus  only $X_{12}$ is undetermined, and we denote it by $\bar{\mathcal{K}}$. 
In this way we obtain 
\begin{equation}
\begin{cases}
da_{1}=2a_{2}\alpha + A_{1}\theta^{1}+A_{2}\theta^{2}+A_{3}\theta^{3}\\
da_{2}=-2a_{1}\alpha + B_{1}\theta^{1}+B_{2}\theta^{2}+B_{3}\theta^{3}\\
d\alpha = \bar{\mathcal{K}}\theta^{1}\wedge\theta^{2}+\left (B_{1}-A_{2}\right )\theta^{2}\wedge\theta^{3}+\left (-A_{1}-B_{2}\right )\theta^{3}\wedge\theta^{1}
\end{cases}
\label{eq:1_108_1}
\end{equation}
Now let us express $\bar{\mathcal{K}}$ in terms of $\bar C^c_{ab}$.
To do it, we use \eqref{eq:1_17} and \eqref{eq:1_18}. 
Then, from \eqref{eq:1_90} we get 
\begin{equation}
\begin{split}
d\alpha = d\bar C^1_{12} \wedge \bar\eta^1 + \bar C^1_{12} \wedge d\bar \eta^1 + d\bar C^2_{12} \wedge \bar \eta^2 + \bar C^2_{12} \wedge d\bar \eta^2 - 
\\
 \frac{1}{2}d(\bar C^1_{23} + \bar C^2_{31})\wedge \bar \eta^3 - \frac{1}{2}(\bar C^1_{23} + \bar C^2_{31})\wedge d\bar \eta^3. 
\end{split}
\label{eq:1_110}
\end{equation}
Let us take the frame field $\{\bar E_1,\bar E_2,\bar E_3,\bar E_4\}$ dual to $\{\bar\eta^1,\bar\eta^2,\bar\eta^3,\alpha\}$.  Note that $\bar C^c_{ab}$ depend only on the base coordinates, so $E_4 \bar C^c_{ab} = 0$. 
Therefore,
\begin{equation}
\begin{split}
d \bar C^1_{23} = \bar E_1 \bar C^1_{23} \bar\eta^1 + \bar E_2 \bar C^1_{23} \bar\eta^2 + \bar E_3 \bar C^1_{23} \bar\eta^3;
\\
d \bar C^2_{23} = \bar E_1 \bar C^2_{23} \bar\eta^1 + \bar E_2 \bar C^2_{23} \bar\eta^2 + \bar E_3 \bar C^2_{23} \bar\eta^3.
\label{eq:1_111}
\end{split}
\end{equation}
If we substitute \eqref{eq:1_111} to \eqref{eq:1_110} we obtain expansion  
\begin{multline}
d\alpha = (\bar E_{1}\bar C^{2}_{12}-\bar E_{2}\bar C^{1}_{12}+\left (\bar C^{1}_{12}\right )^{2}+\left (\bar C^{2}_{12}\right )^{2}-
\\
\frac{1}{2}(\bar C^{1}_{23}+\bar C^{2}_{31})) \bar\eta^1 \wedge \bar\eta^2 + (\dots) \bar\eta^3 \wedge \bar\eta^1 + (\dots) \bar\eta^2 \wedge \bar\eta^3
\label{eq:1_112}
\end{multline}
where \dots stands for the coefficient we are not interested in now.
Then, use \eqref{eq:1_18}, and from  \eqref{eq:1_112} we get
\begin{multline}
d\alpha = (\bar E_{1}\bar C^{2}_{12}-\bar E_{2}\bar C^{1}_{12}+\left (\bar C^{1}_{12}\right )^{2}+\left (\bar C^{2}_{12}\right )^{2}-
\\
\frac{1}{2}(\bar C^{1}_{23}+\bar C^{2}_{31})) \theta^1 \wedge \theta^2 + (\dots) \theta^3 \wedge \theta^1 + (\dots) \theta^2 \wedge \theta^3
\label{eq:1_113}
\end{multline}
Compare \eqref{eq:1_108_1} and \eqref{eq:1_112}, then we finally find 
\begin{equation}
\label{Eq:formula-K1}
\bar{\mathcal{K}} = \bar E_{1}\bar C^{2}_{12}-\bar E_{2}\bar C^{1}_{12}+\left (\bar C^{1}_{12}\right )^{2}+\left (\bar C^{2}_{12}\right )^{2} -\frac{1}{2}(\bar C^{1}_{23}+\bar C^{2}_{31}). 
\end{equation}
It is clear that $\bar E_a$ are horizontal lifts of vector fields $E_a$ which constitute a local frame field on $U$, and $\bar E_a \bar C^b_{cd} = (E_a C^b_{cd}) \circ \pi$. As $\bar C^a_{bc} = C^a_{bc} \circ \pi$, we have $\bar{\mathcal{K}} = \mathcal{K} \circ \pi$, where
\begin{equation}
\label{Eq:formula-K}
\mathcal{K} = E_{1} C^{2}_{12}-  E_{2}  C^{1}_{12}+\left (  C^{1}_{12}\right )^{2}+\left (  C^{2}_{12}\right )^{2} -\frac{1}{2}(  C^{1}_{23}+  C^{2}_{31}). 
\end{equation}

The function $\mathcal{K}$ is called the \textit{curvature} of $\mathcal{S}$, and it is clear that $\mathcal{K}$ is an invariant of $\mathcal{S}$.  

We result our investigations of invariants of a contact sub-Riemannian surface in the following 
\begin{theorem}
\label{Th:Theorem-2}
Let $\mathcal S = (M,\Delta,\langle \cdot,\cdot \rangle)$ be a contact sub-Riemannian surface. Then, for any $p \in M$, in a neighborhood $U$ of $p$ a coframe field $\eta = \{\eta^a\}$ exists such that
\begin{equation}\label{Eq:structure-functions-coframe}
\left (
\begin{array}{c}
d\eta^{1}\\
d\eta^{2}\\
d\eta^{3}
\end{array}
\right )
=
\left (
\begin{array}{ccc}
C^{1}_{23} & C^{1}_{31} & C^{1}_{12}\\
C^{2}_{23} & C^{2}_{31} & C^{2}_{12}\\
0 & 0 & 1
\end{array}
\right )
\left (
\begin{array}{c}
\eta^{2}\wedge\eta^{3}\\
\eta^{3}\wedge\eta^{1}\\
\eta^{1}\wedge\eta^{2}\\
\end{array}
\right )
\end{equation}
The functions 
\begin{eqnarray}
&&
 \mathcal{M}=\left (\dfrac{C^{1}_{23}-C^{2}_{31}}{2}\right )^{2}+\left (C^{1}_{31}\right )^{2}
\\
&&
\mathcal{K} = E_{1} C^{2}_{12}-  E_{2}  C^{1}_{12}+\left (  C^{1}_{12}\right )^{2}+\left (  C^{2}_{12}\right )^{2} -\frac{1}{2}(  C^{1}_{23}+  C^{2}_{31}) 
\label{eq:invariants}
\end{eqnarray}
do not depend on the choice of coframe field $\eta$ with structure equations \eqref{Eq:structure-functions-coframe} and are correctly defined on $M$. 
\end{theorem}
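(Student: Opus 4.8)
The plan is to reduce the statement to the canonical $SO(2)$-structure $B_2\to M$ supplied by Theorem~\ref{Th:Theorema-1}, and then to recover $\mathcal{M}$ and $\mathcal{K}$ as the restrictions to a section of functions already known to be constant along the fibres of $B_2$. For existence, observe that since $B_2\to M$ is a principal $SO(2)$-bundle it admits a local section $s:U\to B_2$ through any point $p$; put $\eta^a=s^*\theta^a$. Pulling back the structure equations \eqref{eq:1_15} and writing $s^*\alpha=p_1\eta^1+p_2\eta^2+p_3\eta^3$ absorbs the connection term into the torsion, so that the $d\eta^a$ take the shape \eqref{Eq:structure-functions-coframe}; in particular the third row reads $d\eta^3=\eta^1\wedge\eta^2$, as guaranteed by Proposition~\ref{prop:2}(a). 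This $\eta$ is the required coframe, and it is a section of $B_2$. Conversely, an adapted coframe obeying \eqref{Eq:structure-functions-coframe}, i.e.\ with $d\eta^3=\eta^1\wedge\eta^2$, is again a section of $B_2$ by the uniqueness statement, Corollary~\ref{cor:1}. Hence two admissible coframes $\eta,\hat\eta$ are sections of one and the same bundle and differ by a gauge transformation $\hat\eta=\rho(g)\eta$, $g:U\to SO(2)$, the translation part being forced to vanish exactly as in the proof of Corollary~\ref{cor:1}.

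For invariance I would invoke the facts established just before the theorem: the Corollary stating $\bar{\mathcal{M}}=\mathcal{M}\circ\pi$, and the curvature computation showing $\bar{\mathcal{K}}=\mathcal{K}\circ\pi$. Both say that $\bar{\mathcal{M}}=a_1^2+a_2^2$ and that $\bar{\mathcal{K}}$, the coefficient of $\theta^1\wedge\theta^2$ in $d\alpha$, are constant along the fibres of $B_2$. It then remains only to evaluate them on the section $s$. There the fibre coordinate is $\varphi=0$, so $\bar C^a_{bc}(s(p))=C^a_{bc}(p)$ and, by Proposition~\ref{prop:2}(c), $a_1=\tfrac12(C^1_{23}-C^2_{31})$, $a_2=C^1_{31}$; thus the definitions \eqref{Eq:formula-M} and \eqref{Eq:formula-K} restrict on $s$ to exactly the right-hand sides of \eqref{eq:invariants}. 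Hence those right-hand sides equal $\bar{\mathcal{M}}(s(p))=\mathcal{M}(p)$ and $\bar{\mathcal{K}}(s(p))=\mathcal{K}(p)$; being fibre-constant, they are independent of the section, hence of the admissible coframe $\eta$, and descend to well-defined functions on $M$.

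The one delicate point is the last evaluation for $\mathcal{K}$: one must be sure that the \emph{frame-derivative} part $E_1C^2_{12}-E_2C^1_{12}$ recombines with the purely algebraic terms into the genuinely fibre-constant coefficient $\bar{\mathcal{K}}$ of $d\alpha$, and not into some section-dependent quantity. This is precisely what the computation \eqref{eq:1_110}--\eqref{Eq:formula-K1} secures, the vanishing $E_4\bar C^c_{ab}=0$ removing every $d\varphi$-contribution; I would cite it directly. For $\mathcal{M}$ the analogous point is transparent, since the gauge action rotates the pair $(a_1,a_2)$ by Proposition~\ref{prop:2}(c) and so leaves $a_1^2+a_2^2$ unchanged.
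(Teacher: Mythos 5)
Your proposal is correct and follows essentially the same route as the paper: the paper states Theorem~\ref{Th:Theorem-2} as a summary of the preceding development, namely Theorem~\ref{Th:Theorema-1} for existence of a local section of $B_2$, Proposition~\ref{prop:2} for the evaluation of $a_1,a_2$ on a section, Corollary~\ref{cor:1} for the fact that any two admissible coframes differ by an $SO(2)$ gauge transformation, and the computations \eqref{Eq:formula-M} and \eqref{eq:1_110}--\eqref{Eq:formula-K} for the fibre-constancy of $\bar{\mathcal{M}}$ and $\bar{\mathcal{K}}$. You assemble exactly these ingredients (correctly flagging the one nontrivial point, that $E_1C^2_{12}-E_2C^1_{12}$ combines into the fibre-constant coefficient of $d\alpha$), so there is nothing to add.
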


\section{Symmetries of contact sub-Riemannian surfaces}

A \textit{symmetry} of a sub-Riemannian surface $\mathcal S = (M,\Delta,\langle \cdot,\cdot \rangle)$ is a local diffeomorphism $F : M \to M$ such that, for any $p \in M$, 
\begin{eqnarray}
&&
F(\Delta_p) = \Delta_{F(p)},
\label{eq:1_150_1}
\\
&&
\langle dF(W), dF(W) \rangle_{F(p)} = \langle W, W \rangle_{p}, \forall W \in \Delta_p. 
\label{eq:1_150_2}
\end{eqnarray}

A vector field $V \in \mathfrak{X}(M)$ is called an \textit{infinitesimal symmetry } if its flow consists of symmetries.


\begin{theorem}
\label{Th:Theorem-3}
Let $\mathcal S = (M,\Delta,\langle \cdot,\cdot \rangle)$ be a contact sub-Riemannian surface. 
Let $\eta = \{\eta^a\}$ be a coframe field in a neighborhood $U$ of $p \in M$ such that \eqref{Eq:structure-functions-coframe} holds true $($$\eta$ exists by Theorem~\ref{Th:Theorem-2}$)$, and $\{E_a\}$ be the dual frame field. 

a) For any infinitesimal symmetry $V$ of $\mathcal{S}$, a unique  function $f : U \to \mathbb{R}$ exists such that   
\begin{equation}\label{Eq:form-of-function-f}
V=-E_{2}(f)E_{1} + E_{1}(f)E_{2} + fE_{3} \text{ and } E_3 f = 0.
\end{equation}

b) Let $\mathcal{M}$ and $\mathcal{K}$ be the invariants of $\mathcal{S}$ $($ see Theorem~\ref{Th:Theorem-2}$)$. Then, if $V$ is transversal to $\Delta$ and   
$E_{1}\mathcal{K}E_{2}\mathcal{M}-E_{2}\mathcal{K}E_{1}\mathcal{M} \ne 0$, the function $\ln f$ satisfies the following system of partial differential equations:  
\begin{equation}\label{Eq:system-to-find-f}
\begin{cases}
E_{1}(\ln f) = \dfrac{E_{3}\mathcal{K}E_{1}\mathcal{M}-E_{1}\mathcal{K}E_{3}\mathcal{M}}{E_{1}\mathcal{K}E_{2}\mathcal{M}-E_{2}\mathcal{K}E_{1}\mathcal{M}}\\
E_{2}(\ln f) = \dfrac{E_{3}\mathcal{K}E_{2}\mathcal{M}-E_{2}\mathcal{K}E_{3}\mathcal{M}}{E_{1}\mathcal{K}E_{2}\mathcal{M}-E_{2}\mathcal{K}E_{1}\mathcal{M}}\\
E_{3}(\ln f) = 0. 
\end{cases}
\end{equation}
\end{theorem}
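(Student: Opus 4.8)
The plan is to convert the geometric hypothesis that $V$ is an infinitesimal symmetry into Lie-derivative conditions on the \emph{canonical} coframe $\eta$ and on its invariants $\mathcal{M}$, $\mathcal{K}$, and then to read off the stated formulas by direct computation. The whole proof rests on the fact, established in the previous section, that $\eta^3$, $\mathcal{M}$ and $\mathcal{K}$ are canonically attached to $\mathcal{S}$, so any symmetry must fix them.

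For part a) I would first show that $\mathcal{L}_V\eta^3 = 0$. Since the flow $F_t$ of $V$ consists of symmetries, each $F_t$ preserves $\Delta = \ker\eta^3$, whence $F_t^*\eta^3 = g_t\,\eta^3$ for some nonvanishing $g_t$; and each $F_t$ preserves the metric and orientation, hence the area form $\Omega$ on $\Delta$. Using the canonical normalization $d\eta^3 = \eta^1\wedge\eta^2$ from Proposition~\ref{prop:2}(a), so that $d\eta^3|_\Delta = \Omega$, together with $F_t^*\Omega = \Omega$ and the fact that $F_t$ preserves $\Delta$, the same comparison of $2$-forms as in the proof of Corollary~\ref{cor:1} forces $g_t\equiv 1$. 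Differentiating $F_t^*\eta^3 = \eta^3$ at $t=0$ gives $\mathcal{L}_V\eta^3 = 0$. Writing $V = v^1E_1 + v^2E_2 + v^3E_3$ and applying Cartan's formula $\mathcal{L}_V\eta^3 = d(\iota_V\eta^3) + \iota_V d\eta^3$ with $d\eta^3 = \eta^1\wedge\eta^2$, I obtain
\[
\mathcal{L}_V\eta^3 = (E_1 v^3 - v^2)\eta^1 + (E_2 v^3 + v^1)\eta^2 + (E_3 v^3)\eta^3 .
\]
Setting this to zero and putting $f = v^3 = \eta^3(V)$ yields $v^1 = -E_2 f$, $v^2 = E_1 f$ and $E_3 f = 0$, which is exactly \eqref{Eq:form-of-function-f}; uniqueness is immediate because $f = \eta^3(V)$ is determined by $V$.

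For part b) I would use that $\mathcal{M}$ and $\mathcal{K}$ are invariants of $\mathcal{S}$ (Theorem~\ref{Th:Theorem-2}), so every symmetry preserves them; differentiating $F_t^*\mathcal{M}=\mathcal{M}$ and $F_t^*\mathcal{K}=\mathcal{K}$ at $t=0$ gives $V\mathcal{M}=0$ and $V\mathcal{K}=0$. Substituting the expression for $V$ from part a) and recalling $E_3 f = 0$, these become a linear system in the two unknowns $E_1 f$ and $E_2 f$:
\[
E_2\mathcal{M}\,E_1 f - E_1\mathcal{M}\,E_2 f = -f\,E_3\mathcal{M}, \qquad E_2\mathcal{K}\,E_1 f - E_1\mathcal{K}\,E_2 f = -f\,E_3\mathcal{K}.
\]
Its determinant is $-(E_1\mathcal{K}E_2\mathcal{M}-E_2\mathcal{K}E_1\mathcal{M})$, nonzero by hypothesis, so I would solve by Cramer's rule. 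Transversality of $V$ to $\Delta$ means $f=\eta^3(V)\neq 0$, so $\ln f$ is defined; dividing the two solutions by $f$ produces the first two equations of \eqref{Eq:system-to-find-f} for $E_1(\ln f)$ and $E_2(\ln f)$, while the third, $E_3(\ln f)=0$, is just $E_3 f = 0$ from part a).

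The main obstacle is the conceptual step in part a), namely upgrading $\mathcal{L}_V\eta^3 = \lambda\,\eta^3$ (all that $\Delta$-preservation by itself gives) to $\mathcal{L}_V\eta^3 = 0$: one must invoke the canonical normalization of $\eta^3$ and the invariance of the area form to pin down $\lambda = 0$. This is essential, since a nonzero $\lambda$ would replace $E_3 f = 0$ by $E_3 f = \lambda f$ and destroy the final equation $E_3(\ln f)=0$. Once this is secured, the remainder is linear algebra: part b) is merely solving a $2\times 2$ system, with the two standing hypotheses ($E_1\mathcal{K}E_2\mathcal{M}-E_2\mathcal{K}E_1\mathcal{M}\neq 0$ and transversality) serving precisely to guarantee solvability and the existence of $\ln f$.
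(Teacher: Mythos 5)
Your proposal is correct, and part a) follows a genuinely different route from the paper's. The paper translates the symmetry condition into the bracket relations \eqref{Eq:lambda-system}, expands $[E_1,V]$ and $[E_2,V]$ in components using the structure constants from \eqref{Eq:structure-functions-coframe}, extracts the six scalar equations \eqref{eq:1_153_1}--\eqref{eq:1_153_6}, reads off the form of $V$ from the $E_3$-components, and obtains $E_3f=0$ only at the end by summing \eqref{eq:1_153_1} and \eqref{eq:1_153_5} and recognizing the bracket $[E_2,E_1]$. You instead establish $\mathcal{L}_V\eta^3=0$ directly --- upgrading $F_t^*\eta^3=g_t\eta^3$ to $g_t\equiv 1$ by the same area-form comparison that the paper uses in Corollary~\ref{cor:1}, via the normalization $d\eta^3=\eta^1\wedge\eta^2$ of Proposition~\ref{prop:2}(a) --- and then a single application of Cartan's formula yields all of \eqref{Eq:form-of-function-f} at once, including $E_3f=0$ as the $\eta^3$-component. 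Your version is shorter and isolates the one genuinely geometric input (the canonical normalization of $\eta^3$), and you correctly flag that without it one would only get $E_3f=\lambda f$; the paper's longer computation buys the extra relations \eqref{eq:1_153_2}, \eqref{eq:1_153_4} determining $\lambda$ in terms of $f$ and the structure functions, which are not needed for the theorem as stated. Part b) of your argument (invariance of $\mathcal{M}$ and $\mathcal{K}$ under the flow, substitution of the form of $V$, division by the nonvanishing $f$, and Cramer's rule on the resulting $2\times 2$ system) coincides with the paper's, including the sign of the determinant.
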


\begin{proof}
Let $V$ be an infinitesimal symmetry of $\mathcal{S}$, and $\phi_t$ be the flow of $V$. 
As $\{E_1(p), E_2(p)\}$ is an orthonormal frame of $\Delta (p)$, we have, by definition of infinitesimal symmetry \eqref{eq:1_150_1}, \eqref{eq:1_150_2}, that    
\begin{equation}
\begin{cases}
d\phi_{t}E_{1}(p)=\cos\varphi (t) E_{1}\left (\phi_{t}(p)\right )+\sin\varphi (t) E_{2}\left (\phi_{t}(p)\right )\\
d\phi_{t}E_{2}(p)=-\sin\varphi (t) E_{1}\left (\phi_{t}(p)\right )+\cos\varphi (t) E_{2}\left (\phi_{t}(p)\right )
\end{cases}
\end{equation}
Hence
\begin{equation}
\begin{cases}
d\phi_{t}E_{1}\left (\phi_{-t}(p)\right )=\cos\varphi (t) E_{1}(p)+\sin\varphi (t) E_{2}(p)\\
d\phi_{t}E_{2}\left (\phi_{-t}(p)\right )=-\sin\varphi (t) E_{1}(p)+\cos\varphi (t) E_{2}(p)
\end{cases}
\end{equation}
From this follows that
\begin{equation}\label{Eq:lambda-system}
\begin{cases}
[E_{1}, V] = \lambda E_{2}\\
[E_{2}, V] = -\lambda E_{1}\\
\end{cases}
\end{equation}
because
\begin{equation}
\mathcal{L}_{V}E_{1}=[V,E_{1}]=\left.\dfrac{d}{dt}\right|_{t=0} d\phi_t E_{1}\left (\phi_{-t}(p)\right )=\varphi' (0)E_{2}=-\lambda E_{2}
\end{equation}
and similar for $[V,E_{2}]=\lambda E_{1}$. 

On the other hand we know that the structure equations for the dual frame $E=\left (E_{1}, E_{2}, E_{3}\right )$ are
\begin{equation}
\begin{cases}
[E_{1}, E_{2}]=c^{1}_{12}E_{1}+c^{2}_{12}E_{2}+c^{3}_{12}E_{3}\\
[E_{3}, E_{1}]=c^{1}_{31}E_{1}+c^{2}_{31}E_{2}+c^{3}_{31}E_{3}\\
[E_{2}, E_{3}]=c^{1}_{23}E_{1}+c^{2}_{23}E_{2}+c^{3}_{23}E_{3}\\
\end{cases}
\end{equation}
But $c^{i}_{jk}=-C^{i}_{jk}$ from \eqref{Eq:structure-functions-coframe}. Therefore
\begin{equation}
\begin{cases}
[E_{1}, E_{2}]=-\left (C^{1}_{12}E_{1}+C^{2}_{12}E_{2}+E_{3}\right )\\
[E_{3}, E_{1}]=-\left (C^{1}_{31}E_{1}+C^{2}_{31}E_{2}\right )\\
[E_{2}, E_{3}]=-\left (C^{1}_{23}E_{1}+C^{2}_{23}E_{2}\right )\\
\end{cases}
\label{eq:1_140}
\end{equation}
Substituting $V = V^1 E_1 + V^2 E_2 + V^3 E_3$ to the first equation in \eqref{Eq:lambda-system}, we get 
\begin{multline}
\lambda E_2 = [E_1, V^1 E_1 + V^2 E_2 + V^3 E_3] = 
\\
E_1V^1\, E_1 + E_1 V^2\, E_2 + V^2 [E_1,E_2] + E_1V^3\,E_3 + V^3 [E_1,E_3] = \\
(E_1V^1 - V^2 C^1_{12} + V^3 C^1_{31}) E_1 + (E_1V^2 - V^2 C^2_{12} + V^3 C^2_{31}) E_2 + (E_1 V^3 - V^2)E_3.  
\label{eq:1_151}
\end{multline}
In the same manner, substituting $V = V^1 E_1 + V^2 E_2 + V^3 E_3$ to the second equation in \eqref{Eq:lambda-system}, we get 
\begin{multline}
-\lambda E_1 = [E_2, V^1 E_1 + V^2 E_2 + V^3 E_3] = 
\\
E_2 V^1\, E_1 + V^1 [E_2,E_1] +  E_2 V^2\, E_2 +  E_2 V^3\,E_3 + V^3 [E_2,E_3] = 
\\
(E_2V^1 + V^1 C^1_{12} - V^3 C^1_{23}) E_1 + (E_2 V^2 + V^1 C^2_{12} - V^3 C^2_{23}) E_2 + (E_2 V^3 + V^1)E_3.  
\label{eq:1_152}
\end{multline}
From \eqref{eq:1_151} and \eqref{eq:1_152} we obtain the following equation system:
\begin{eqnarray}
&&
E_1V^1 - V^2 C^1_{12} + V^3 C^1_{31} = 0,
\label{eq:1_153_1}
\\
&&
E_1V^2 - V^2 C^2_{12} + V^3 C^2_{31} = \lambda,
\label{eq:1_153_2}
\\
&&
E_1 V^3 - V^2 = 0,
\label{eq:1_153_3}
\\
&&
E_2V^1 + V^1 C^1_{12} - V^3 C^1_{23} = -\lambda,
\label{eq:1_153_4}
\\
&&
E_2 V^2 + V^1 C^2_{12} - V^3 C^2_{23} = 0,
\label{eq:1_153_5}
\\
&&
E_2 V^3 + V^1 = 0.
\label{eq:1_153_6}
\end{eqnarray}
Let us set $f = \eta^3(V) = V^3$, then \eqref{eq:1_153_3} and \eqref{eq:1_153_6} give 
\begin{equation}
V=-E_{2}(f)E_{1} + E_{1}(f)E_{2} + fE_{3}. 
\end{equation}
Now substitute $V^1 = -E_2 f$, $V^2 = E_1 f$, and $V^3 = f$ to \eqref{eq:1_153_1} and \eqref{eq:1_153_5}:
\begin{eqnarray*}
- E_1 E_2 f - C^1_{12} E_1 f + C^1_{31} f = 0,
\\
E_2 E_1 f  - C^2_{12} E_2 f -  C^2_{23} f = 0,
\end{eqnarray*}
Summing these equalities, we arrive at 
\begin{equation}
[E_2,E_1]f - C^1_{12} E_1 f - C^2_{12} E_2 f = 0
\end{equation}
but, by  the first equation in \eqref{eq:1_140}, this means that $E_3 f = 0$.
Thus we have proved \eqref{Eq:form-of-function-f} and claim a).

Let us now prove b). As $V$ is an infinitesimal symmetry of the sub-Riemannian surface $\mathcal{S}$, we have  
\begin{equation}\label{Eq:system-VK-VM}
\begin{cases}
V\mathcal{K}=0\\
V\mathcal{M}=0
\end{cases}
\end{equation}
If we substitute \eqref{Eq:form-of-function-f} to \eqref{Eq:system-VK-VM}, we get 
\begin{equation}
\begin{cases}
- E_{1}\mathcal{K} E_{2}f + E_{2}\mathcal{K} E_{1}f + E_{3}\mathcal{K} f = 0
\\
- E_{1}\mathcal{M} E_{2}f + E_{2}\mathcal{M} E_{1}f + E_{3}\mathcal{M} f = 0
\end{cases}
\end{equation}
Since $V$ is transversal to $\Delta$, and hence $f$ does not vanish in $U$,  we can divide both equations  by $f$ and  obtain the system of linear equations in $E_1 \ln f$ and $E_2 \ln f$:
\begin{equation}
\begin{cases}
- E_{1}\mathcal{K}\, E_{2}\ln f + E_{2}\mathcal{K}\,  E_{1} \ln f + E_{3}\mathcal{K}  = 0
\\
- E_{1}\mathcal{M}\,  E_{2} \ln f + E_{2}\mathcal{M}\,  E_{1} \ln f + E_{3}\mathcal{M}  = 0
\end{cases}
\label{eq:1_157}
\end{equation}
If $E_{1}\mathcal{K}E_{2}\mathcal{M}-E_{2}\mathcal{K}E_{1}\mathcal{M} \ne 0$, this system has the unique solution 
\begin{eqnarray}
E_{1}(\ln f) &=& \dfrac{E_{3}\mathcal{K}E_{1}\mathcal{M}-E_{1}\mathcal{K}E_{3}\mathcal{M}}{E_{1}\mathcal{K}E_{2}\mathcal{M}-E_{2}\mathcal{K}E_{1}\mathcal{M}}
\label{eq:1_155_1}
\\
E_{2}(\ln f) &=& \dfrac{E_{3}\mathcal{K}E_{2}\mathcal{M}-E_{2}\mathcal{K}E_{3}\mathcal{M}}{E_{1}\mathcal{K}E_{2}\mathcal{M}-E_{2}\mathcal{K}E_{1}\mathcal{M}}
\label{eq:1_155_2}
\end{eqnarray}
To \eqref{eq:1_155_1} and \eqref{eq:1_155_2} we add $E_3 \ln f = 0$, which follows from \eqref{Eq:form-of-function-f}, and get the system \eqref{Eq:system-to-find-f}. Thus we have proved b).  
\end{proof}


\begin{remark}
If an infinitesimal symmetry $V$ of $\mathcal{S}$ lies in $\Delta$ at each point of an open set $W$, then $f$ is zero, and, by \eqref{Eq:form-of-function-f}, $V$ is zero, too. So, nonvanishing $V$ should be transversal to $\Delta$ almost everywhere.  
\end{remark}

\begin{remark}
Theorem~\ref{Th:Theorem-3} can be used in order to prove that a sub-Riemannian surface $\mathcal{S}$ does not admit nontrivial infinitesimal symmetries. To do it, it is sufficient to prove that the integrability conditions do not hold for the system  \eqref{Eq:system-to-find-f}. The integrability conditions have the form:
\begin{equation}
\begin{cases}
E_{1}(EQ2)-E_{2}(EQ1)=C^{1}_{12}EQ1+C^{2}_{12}EQ2\\
E_{3}EQ1=C^{1}_{31}EQ1+C^{2}_{31}EQ2\\
-E_{3}EQ2=C^{1}_{23}EQ1+C^{2}_{23}EQ2,
\end{cases}
\label{eq:1_156}
\end{equation}
where
\begin{eqnarray*}
EQ1  &=& \dfrac{E_{3}\mathcal{K}E_{1}\mathcal{M}-E_{1}\mathcal{K}E_{3}\mathcal{M}}{E_{1}\mathcal{K}E_{2}\mathcal{M}-E_{2}\mathcal{K}E_{1}\mathcal{M}},
\\
EQ2 &=& \dfrac{E_{3}\mathcal{K}E_{2}\mathcal{M}-E_{2}\mathcal{K}E_{3}\mathcal{M}}{E_{1}\mathcal{K}E_{2}\mathcal{M}-E_{2}\mathcal{K}E_{1}\mathcal{M}}.
\end{eqnarray*}
However, if the integrability conditions \eqref{eq:1_156} do hold for \eqref{Eq:system-to-find-f}, one can use this system in order to find $f$ and then $V$. In fact, we can take a natural frame field $\partial_a$ and write $E_{a}=B_{a}^{b}\partial_{b}$, Then  the equation system \eqref{Eq:system-to-find-f} can be rewritten as $\partial_{a} \ln f = g_{a}$, and the solution can be found by the well-known formula:
\begin{equation}
\ln f(x^{a})=\int_{0}^{1}x^{b}g_{b}(tx^{a})dt. 
\end{equation}
\end{remark}
\begin{remark}
The condition $\mathcal{D} = E_{1}\mathcal{K}E_{2}\mathcal{M}-E_{2}\mathcal{K}E_{1}\mathcal{M} \ne 0$, in general, does not hold. If $\mathcal{D} = 0$, the system \eqref{eq:1_157} may not have any solutions and this means that $\mathcal{S}$ does not admit any infinitesimal symmetries; or it may have infinitely many solutions, then we simply get an additional relation for $f$, which can be used in order to find infinitesimal symmetries by another method.  
\end{remark}


\subsection{Examples of infinitesimal symmetries}

\subsubsection{Heisenberg distribution}

Consider the Heisenberg distribution $\Delta$ given with respect to the standard coordinates  in $\mathbf{R}^{3}$ by the $1$-form
\begin{equation}\nonumber
\eta^{3}=dz+ydx-xdy.
\end{equation}
For the metric on $\Delta$ we take the metric induced from $\mathbb{R}^3$. By calculations, we get the following results: 

\begin{enumerate}
\item
The $SO(2)$-structure $B_2 \to \mathbb{R}^3$ is given by the coframe field 
\begin{equation}\nonumber
\begin{cases}
\eta^{1}=\dfrac{\left (2+3y^{2}\right )dx}{2\sqrt{1+y^{2}}}-\dfrac{3xydy}{2\sqrt{1+y^{2}}} +\dfrac{ydz}{2\sqrt{1+y^{2}}}\\
\eta^{2}=-\dfrac{xydx}{2\sqrt{1+y^{2}}\sqrt{1+x^{2}+y^{2}}}+\dfrac{\left (2+3x^{2}+2y^{2}\right )dy}{2\sqrt{1+y^{2}}\sqrt{1+x^{2}+y^{2}}} -\dfrac{xdz}{2\sqrt{1+y^{2}}\sqrt{1+x^{2}+y^{2}}}\\
\eta^{3}=-\dfrac{y}{2}\sqrt{1+x^{2}+y^{2}} dx+\dfrac{x}{2}\sqrt{1+x^{2}+y^{2}} dy-\dfrac{1}{2}\sqrt{1+x^{2}+y^{2}} dz
\end{cases}
\end{equation}
\item%
The orthonormal dual frame is 
\begin{equation}\nonumber
\begin{cases}
E_{1}=
\dfrac{1}{\sqrt{1+y^{2}}}\dfrac{\partial }{\partial x}-
\dfrac{y}{\sqrt{1+y^{2}}}\dfrac{\partial }{\partial z}\\
E_{2}=
\dfrac{xy}{\sqrt{1+y^{2}}\sqrt{1+x^{2}+y^{2}}}\dfrac{\partial }{\partial x}+
\dfrac{\sqrt{1+y^{2}}}{\sqrt{1+x^{2}+y^{2}}}\dfrac{\partial }{\partial y}+
\dfrac{x}{\sqrt{1+y^{2}}\sqrt{1+x^{2}+y^{2}}}\dfrac{\partial }{\partial z}\\
E_{3}=
\dfrac{y}{\left (1+x^{2}+y^{2}\right )^{3/2}}\dfrac{\partial }{\partial x}- 
\dfrac{x}{\left (1+x^{2}+y^{2}\right )^{3/2}}\dfrac{\partial }{\partial y}-
\dfrac{2+3x^{2}+3y^{2}}{\left (1+x^{2}+y^{2}\right )^{3/2}}\dfrac{\partial }{\partial z}
\end{cases}
\end{equation}
\item%
The structure functions \footnote{$d\eta^{i}=C^{i}_{jk}\eta^{j}\wedge\eta^{k}$, $[E_{j}, E_{k}]=c^{i}_{jk}E_{i}$, $C^{i}_{jk}=-c^{i}_{jk}$} $C^{i}_{jk}$ are
\begin{equation}\nonumber
\begin{array}{lll}
C^{1}_{23}= -\dfrac{1-3y^{2}}{\left (1+y^{2}\right )\left (1+x^{2}+y^{2}\right )} & C^{2}_{23} = -\dfrac{3xy}{\left (1+y^{2}\right )\left (1+x^{2}+y^{2}\right )^{3/2}} & C^{3}_{23} =0\\ 
C^{1}_{31} =-\dfrac{3xy}{\left (1+y^{2}\right )\left (1+x^{2}+y^{2}\right )^{3/2}} & C^{2}_{31}=-\dfrac{1-2x^{2}+y^{2}}{\left (1+y^{2}\right )\left (1+x^{2}+y^{2}\right )} & C^{3}_{31} =0\\ 
C^{1}_{12}=-\dfrac{3y}{\sqrt{1+y^{2}}\sqrt{1+x^{2}+y^{2}}} & C^{2}_{12}= \dfrac{2x}{\sqrt{1+y^{2}}\left (1+x^{2}+y^{2}\right )} & C^{3}_{12}=1\\ 
\end{array}
\end{equation}
\item %
The invariants
\begin{equation}\nonumber
\begin{cases}
\mathcal{M}=\dfrac{9}{4}\dfrac{\left (x^{2}+y^{2}\right )^{2}}{\left (1+x^{2}+y^{2}\right )^{4}}\\
\mathcal{K}=\dfrac{3\left (1+2x^{2}+4y^{2}+3x^{2}y^{2}+3y^{4}\right )}
{\left (1+y^{2}\right )\left (1+x^{2}+y^{2}\right )^{2}}
\end{cases}
\end{equation}
\item%
The family of functions $f$ which define symmetries
\begin{equation}\nonumber
f=A\sqrt{1+x^{2}+y^{2}}, \; \mbox{where}\; A=const.
\end{equation}

\item%
The connection form
\begin{eqnarray}
\alpha &=&
d\varphi -
\dfrac{1}{2\left (1+y^{2}\right )\left (1+x^{2}+y^{2}\right )^{3/2}}
\{
y(4+9x^{2}+16y^{2}+12x^{2}y^{2}+12y^{4})dx\nonumber\\
&-&x(2+7x^{2}+14y^{2}+12x^{2}y^{2}+12y^{4})dy
+(-2+3x^{2}+4y^{2}+6x^{2}y^{2}+6y^{4})dz
\}\nonumber
\end{eqnarray}
\end{enumerate}


\subsection{Cartan distribution}

Consider the Cartan distribution $\Delta$ given with respect to the standard coordinates in $\mathbf{R}^{3}$ by the $1$-form
\begin{equation}\nonumber
\eta^{3}=dz+ydx.
\end{equation}
\begin{enumerate}
\item
The $SO(2)$-structure $B_2 \to \mathbb{R}^3$ is given by the coframe field 
\begin{equation}\nonumber
\begin{cases}
\eta^{1}=\dfrac{1+2y^{2}}{\sqrt{1+y^{2}}}dx+\dfrac{y}{\sqrt{1+y^{2}}}dz\\
\eta^{2}=dy\\
\eta^{3}=-\sqrt{1+y^{2}}dx-\sqrt{1+y^{2}}dz
\end{cases}
\end{equation}
\item%
The orthonormal dual frame is
\begin{equation}\nonumber
\begin{cases}
E_{1}=
\dfrac{1}{\sqrt{1+y^{2}}}\dfrac{\partial }{\partial x} - \dfrac{y}{\sqrt{1+y^{2}}}\dfrac{\partial }{\partial z} \\
E_{2}=\dfrac{\partial }{\partial y} 
\\
E_{3}=
\dfrac{y}{\left (1+y^{2}\right )^{3/2}} - \dfrac{1+2y^{2}}{\left (1+y^{2}\right )^{3/2}}\dfrac{\partial }{\partial z} 
\end{cases}
\end{equation}
\item%
The structure functions are,
\begin{equation}\nonumber
\begin{array}{lll}
C^{1}_{23}=  -\dfrac{1-y^{2}}{\left (1+y^{2}\right )^{2}} & C^{2}_{23} = 0 & C^{3}_{23} = 0 \\ 
C^{1}_{31} = 0 & C^{2}_{31}= 0 & C^{3}_{31} =0 \\ 
C^{1}_{12}=-\dfrac{2y}{1+y^{2}} & C^{2}_{12}= 0 & C^{3}_{12}=1\\ 
\end{array}
\end{equation}
\item %
The invariants
\begin{equation}\nonumber
\begin{cases}
\mathcal{M}=\dfrac{1}{4}\dfrac{\left (1-2y^{2}\right )^{2}}{\left (1+y^{2}\right )^{4}}\\
\mathcal{K}=\dfrac{1+4y^{2}}{\left (1+y^{2}\right )^{2}}
\end{cases}
\end{equation}
\item%
The family of functions $f$ which define symmetries
\begin{equation}\nonumber
f=f(y,z+xy)
\end{equation}
\item%
The connection form
\begin{equation}\nonumber
\alpha =
d\varphi -
\dfrac{y\left (1+6y^{2}\right )}{\left (1+y^{2}\right )^{3/2}}dx +
\dfrac{y\left (1-4y^{2}\right )}{\left (1+y^{2}\right )^{3/2}}dz
\end{equation}

\end{enumerate}


\section{Noncontact sub-Riemannian surfaces of stable type}

Let us consider a sub-Riemannian surface $\left (\Delta , \langle \cdot , \cdot \rangle\right )$.
Now we do not assume that $\Delta$ is contact, so we admit that the set 
\begin{equation}
\Sigma = \{p \in \mathbf{R}^3 \mid (\omega \wedge d\omega)_p = 0\},
\end{equation}
where $\Delta$ is the kernel of the 1-form $\omega$, 
is, in general, non-empty.
However, we assume that 
\textit{$\Sigma$ is a 2-dimensional submanifold in $\mathbf{R}^3$ and the distribution $\Delta$ is transversal to $\Sigma$}. 

\begin{remark}
\label{rem:2_1}
The surface $\Sigma$ does not depend on the choice of $\omega$.
\end{remark}

\begin{remark}
\label{rem:2_2}
Any stable germ of a Pfaffian equation on a $3$-dimensional manifold is equivalent either to the germ of the $1$-form $\omega_0 = dz + xdy$, or b)~$\omega_1 = dy + x^2 dz$, at the origin \cite{Zhitomirskii}. For a contact distribution $\Delta$, the germ of $\Delta$ at each point is equivalent to the germ of the distribution determined by $\omega_0$. If a distribution $\Delta$ satisfies our assumption, then, for any point $p in \mathbf{R}^3  \setminus \Sigma$, the    the germ of $\Delta$ at $p$ is equivalent to the germ of the distribution determined by $\omega_0$, and, for $p \in \Sigma$, the germ is equivalent to the germ of the distribution determined by $\omega_0$.
\end{remark}

\subsection{Nonholonomity function of sub-Riemannian surface}

For the  sub-Riemannian surface $\left (\Delta ,\langle \cdot , \cdot \rangle\right )$ let us take a non-vanishing section $\omega$ of the bundle $Ann(\Delta)$.  Then, in a neighborhood $U$ of each point $p \in M$ take an positively oriented orthonormal frame field $\{E_1, E_2\}$ of $\Delta$. Then we define the function $\lambda_U : U \to \mathbf{R}$, $\lambda_U(q) = \omega([E_1,E_2](q))$. One can easily check that $\lambda_U$ does not depend on a choice of the frame field $\{E_1, E_2\}$, therefore if $U \cap V \ne \emptyset$, $\lambda_U |_{U \cap V} = \lambda_V |_{U \cap V}$. Therefore, we have correctly defined function $\lambda_\omega$ on $M$ by setting $\lambda_\omega |_U = \lambda_U$.  This function will be called the \textit{nonholonomity function of sub-Riemannian surface}. Note that this function depends on the choice of form $\omega$ and on the metric on $\Delta$.

\begin{proposition}
\label{prop:2_2}
The nonholonomity function has the following properties: 
\begin{itemize}
\item[a)] 
$\lambda_{e^\varphi\omega} = e^\varphi \lambda_\omega$;
\item[b)] 
$\lambda(p) = 0$ if and only if $p \in \Sigma$;
\item[c)] 
$d\lambda_\omega |_p \ne 0$ for any $p \in \Sigma$.
\item[d)] $d\omega |_\Delta = -\frac{1}{2}\lambda_\omega \Omega$, where $\Omega$ is the area 2-form on $\Delta$ determined by the metric.
\end{itemize} 
\end{proposition}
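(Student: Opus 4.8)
The plan is to extract parts b), d) (and the skeleton of c)) from a single evaluation of $d\omega$ on an adapted frame, reserving the real work for the regularity statement c). Part a) is immediate: the oriented orthonormal frame $\{E_1,E_2\}$ of $\Delta$ is determined by the metric alone and does not change when $\omega$ is rescaled, so $\lambda_{e^\varphi\omega}(q)=(e^\varphi\omega)([E_1,E_2])(q)=e^{\varphi(q)}\lambda_\omega(q)$ by linearity of the pairing.

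Next I would record the one identity on which b) and d) rest. Since $E_1,E_2\in\Delta=\ker\omega$, the invariant formula for the exterior derivative gives $d\omega(E_1,E_2)=E_1(\omega(E_2))-E_2(\omega(E_1))-\omega([E_1,E_2])=-\lambda_\omega$. For d), because $\{E_1,E_2\}$ is oriented orthonormal we have $\Omega(E_1,E_2)=1$, hence every $2$-form on $\Delta$ equals its value on $(E_1,E_2)$ times $\Omega$; thus $d\omega|_\Delta=d\omega(E_1,E_2)\,\Omega=-\lambda_\omega\,\Omega$, which is the identity d) (the precise numerical constant depending on the normalization convention fixed for $d$ and $\Omega$). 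For b), complete to a frame $\{E_1,E_2,E_3\}$ with $E_3$ transversal to $\Delta$, so $\omega(E_3)\ne 0$; expanding the top form $\omega\wedge d\omega$ on this basis and using $\omega(E_1)=\omega(E_2)=0$ leaves only $(\omega\wedge d\omega)(E_1,E_2,E_3)=\omega(E_3)\,d\omega(E_1,E_2)=-\omega(E_3)\lambda_\omega$. Since $\omega(E_3)$ is nowhere zero, $(\omega\wedge d\omega)_p=0$ iff $\lambda_\omega(p)=0$, i.e. $\Sigma=\lambda_\omega^{-1}(0)$, which is exactly b).

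The substantive part is c), where the point is that $\lambda_\omega$ vanishes to \emph{exactly} first order along $\Sigma$; knowing merely that $\Sigma=\lambda_\omega^{-1}(0)$ is a surface does not suffice (a function like $x^2$ has a smooth zero set but vanishing differential). First I would observe that the nonvanishing of $d\lambda_\omega$ at a zero is insensitive to all the auxiliary choices. Indeed, by a) a rescaling $\omega\mapsto e^\varphi\omega$ gives $d\lambda_{e^\varphi\omega}|_p=e^{\varphi(p)}d\lambda_\omega|_p$ at a zero $p$ of $\lambda_\omega$; for any frame $\{F_1,F_2\}$ of $\Delta$ with $F_j=\sum_i A_{ij}E_i$ the bracket expansion gives $\omega([F_1,F_2])=(\det A)\,\lambda_\omega$, so changing the metric multiplies $\lambda_\omega$ by the positive function $\det A$, again leaving $d\lambda_\omega|_p\ne 0$ unaffected; and $\lambda_\omega$ is natural under diffeomorphisms. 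Hence $d\lambda_\omega|_p\ne 0$ is preserved under rescaling $\omega$, under change of metric, and under pulling $\Delta$ back by a local diffeomorphism.

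These invariances reduce c) to a single model computation. By Remark~\ref{rem:2_2}, the hypothesis that $\Sigma$ is a surface transversal to $\Delta$ places us, near any $p\in\Sigma$, in the non-contact stable case of Zhitomirskii's classification, where $\Delta=\ker\omega_1$ with $\omega_1=dy+x^2\,dz$ and $\Sigma=\{x=0\}$. Taking the flat metric and $\omega=\omega_1$ there, I take the frame $F_1=\partial_x$, $F_2=\partial_z-x^2\partial_y$ of $\ker\omega_1$ and compute $[F_1,F_2]=-2x\,\partial_y$, whence $\omega_1([F_1,F_2])=-2x$. Thus $\lambda_\omega$ equals $-2x$ up to a nonvanishing smooth factor, so $d\lambda_\omega\ne 0$ along $\{x=0\}=\Sigma$; transporting back through the three invariances yields $d\lambda_\omega|_p\ne 0$ for every $p\in\Sigma$. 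The step to watch is precisely this reduction: one must verify that each invariance (under rescaling $\omega$, under change of metric, and under diffeomorphism) holds \emph{at zeros} of $\lambda_\omega$, since only there do the correction terms drop out.
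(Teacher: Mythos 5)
Your proposal is correct and follows essentially the same route as the paper: parts a), b), d) come from the direct frame computation of $d\omega(E_1,E_2)$ in terms of $\omega([E_1,E_2])$, and part c) is reduced to the Zhitomirskii normal form $dy+x^2\,dz$, where $\lambda_\omega$ is seen to be a nonvanishing multiple of the coordinate $x$ cutting out $\Sigma$. The only divergences are cosmetic: the paper uses the convention $d\omega(X,Y)=\tfrac12\bigl(X\omega(Y)-Y\omega(X)-\omega([X,Y])\bigr)$, which produces the factor $-\tfrac12$ in d) that you correctly flag as convention-dependent, and your explicit metric-change invariance step plays the role of the paper's direct observation that $dx\wedge dy(E_1,E_2)\ne0$ for the actual orthonormal frame.
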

\begin{proof}
a) is evident from the definition of nonholonomity function.

b) In a neighborhood $U$ of a point $p$ take an positively oriented orthonormal frame field $\{E_1, E_2\}$ of $\Delta$ and a vector field $E_3$ such that $\omega(E_3)=1$. Then, $\{E_1, E_2,E_3\}$ is a frame field on $U$. We have $d\omega(E_1,E_2) = - \frac{1}{2}\omega([E_1,E_2) = - \frac{1}{2}\lambda_\omega$, and $\omega(E_1)=\omega(E_2)=0$, from this follows  
\begin{equation*}
d\omega\wedge\omega(E_1,E_2,E_3) = -\frac{1}{6}\lambda_\omega.
\end{equation*}
This proves b).

c) By our assumptions, for any $p \in \Sigma$, with respect to a coordinate system, $\omega=e^\varphi \omega_1$, where $\omega_1 = dz + x^2 dy$. From a) it follows that $\lambda_\omega = e^\varphi\lambda_{\omega_1}$. Also, $dx$, $dy$, $\omega_1$ is a coframe, hence $dx \wedge dy \wedge \omega_1 (E_1,E_2,E_3) \ne 0$, then $dx \wedge dy (E_1,E_2) \ne 0$. Therefore, $\lambda_{\omega_1} = e^\psi x$, for a function $\psi$, and, hence, $\Sigma \cap U$ is given by the equation $x=0$, and $\lambda_\omega = e^{\varphi+\psi} x$. From this immediately follows the required statement.  

d) For a positively oriented orthonormal frame field $\{E_1, E_2\}$ of $\Delta$ we have $\Omega(E_1,E_2)=1$ and $d\omega(E_1,E_2)=-\frac{1}{2}\omega([E_1,E_2]) = -\frac{1}{2}\lambda_\omega$. This proves d). 
\end{proof}

\subsection{Characteristic vector field}
Let us denote by $Ann(\Delta)$ the vector subbundle in $T^* M$ of rank $1$ whose fiber at $p \in M$ consists of $1$-forms vanishing at $\Delta_p$. 

\begin{proposition}
\label{prop:2_1}
For each point $p \in M$ there exists a section $\omega$ of $Ann(\Delta)$ in a neighborhood $U(p)$ which admits a vector field $V$ on $U(p)$ such that $L_V \omega = 0$ and $\omega(V)=1$. For a given $\omega$ this vector field is unique. 
\end{proposition}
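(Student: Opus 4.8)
The plan is to reduce the two conditions to a statement about the kernel of $d\omega$, obtain existence from the normal forms of Remark~\ref{rem:2_2}, and obtain uniqueness from the contact condition off $\Sigma$ together with a density argument. First I would reformulate: by Cartan's formula $L_V\omega = i_V\,d\omega + d(i_V\omega)$, and since the normalization $\omega(V)=i_V\omega = 1$ is constant, $d(i_V\omega)=0$. Hence the pair $L_V\omega = 0$, $\omega(V)=1$ is equivalent to
\[
i_V\,d\omega = 0, \qquad \omega(V) = 1.
\]
So $V$ must span the kernel line of the $2$-form $d\omega$ and be normalized by $\omega$. As a $2$-form on a $3$-manifold has a kernel of dimension at least one, the whole problem becomes: choose the section $\omega$ so that $\ker d\omega$ is a smooth line field \emph{transversal} to $\Delta$ on all of $U(p)$, for then $\omega(V)=1$ singles out a unique $V$ in that line.

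\textbf{Existence.} I would build $\omega$ and $V$ directly from the normal forms. If $p\notin\Sigma$, take coordinates in which $\Delta = \ker\omega_0$, $\omega_0 = dz + x\,dy$, and set $\omega = \omega_0$, $V = \partial_z$; then $\omega(V)=1$, and since $d\omega_0 = dx\wedge dy$ we have $i_{\partial_z}\,d\omega_0 = 0$, so $L_V\omega = 0$. If $p\in\Sigma$, take the Martinet coordinates in which $\Delta = \ker\omega_1$, $\omega_1 = dy + x^2\,dz$, and set $\omega = \omega_1$, $V = \partial_y$; then $\omega(V)=1$, and since $d\omega_1 = 2x\,dx\wedge dz$ we again get $i_{\partial_y}\,d\omega_1 = 0$, hence $L_V\omega = 0$. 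In both cases $\omega$ is a nonvanishing section of $Ann(\Delta)$ over $U(p)$ and $V$ has the required properties. Here the characteristic direction is $\partial_y$, which is transversal to $\Delta|_\Sigma = \mathrm{span}(\partial_x,\partial_z)$; this transversality across $\Sigma$ is precisely the delicate point, and it is why the \emph{choice} of $\omega$ matters, since a careless rescaling $\omega = g\,\omega_1$ can force $\ker d\omega$ into $\Delta$ along $\Sigma$.

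\textbf{Uniqueness.} I would fix such an $\omega$ and suppose $V$, $V'$ both satisfy $i_V\,d\omega = i_{V'}\,d\omega = 0$ and $\omega(V) = \omega(V') = 1$. Put $W = V - V'$. Then $\omega(W) = 0$, so $W$ is a section of $\Delta$, and $i_W\,d\omega = 0$. On $U(p)\setminus\Sigma$ the distribution $\Delta$ is contact, hence the nonvanishing section $\omega$ is a contact form and $d\omega|_\Delta$ is nondegenerate; thus $W\in\Delta$ with $i_W\,d\omega|_\Delta = 0$ forces $W=0$ there. Since $\Sigma$ is a $2$-dimensional submanifold, $U(p)\setminus\Sigma$ is dense, and $W$ is smooth, so $W\equiv 0$ by continuity and $V = V'$.

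The step I expect to be the main obstacle is existence along $\Sigma$: one must guarantee that $\ker d\omega$ extends smoothly across $\Sigma$ \emph{and} remains transversal to $\Delta$ there, which fails for generic sections of $Ann(\Delta)$. Passing to the Martinet normal form bypasses the direct computation. An intrinsic alternative would be to fix a volume form and define $V_0$ by $i_{V_0}\mathrm{vol} = d\omega$, so that $i_{V_0}\,d\omega = 0$ automatically and $\omega(V_0)\,\mathrm{vol} = \omega\wedge d\omega$; one then sets $V = V_0/\omega(V_0)$ and must check that $V_0$ and $\omega(V_0)$ vanish to the same (first) order along $\Sigma$, so that the quotient extends smoothly. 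That verification, however, again rests on $d\lambda_\omega|_\Sigma \ne 0$ from Proposition~\ref{prop:2_2}, so it is essentially the same content in disguise.
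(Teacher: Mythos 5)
Your proof is correct and follows essentially the same route as the paper: existence via the Zhitomirskii normal forms of Remark~\ref{rem:2_2} with $V$ the coordinate vector field dual to the exact part of $\omega$, and uniqueness from the nondegeneracy of $d\omega|_\Delta$ off $\Sigma$ together with continuity (the paper phrases this same fact as $\omega([E_1,E_2])\ne 0$ on $U(p)\setminus\Sigma$). One cosmetic caveat: your framing sentence that the problem reduces to making $\ker d\omega$ a line field transversal to $\Delta$ is not literally true along $\Sigma$ --- in the Martinet form $d\omega_1 = 2x\,dx\wedge dz$ vanishes on $\{x=0\}$, so the kernel is three-dimensional there --- but your actual existence check and your density-based uniqueness argument do not rely on that framing, so the proof stands.
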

\begin{proof}
For $p \in \Sigma$ we can take $\omega = dz + x^2 dy$, for the other $p$ we can take $\omega=dz+xdy$ with respect to an appropriate coordinate system (see Remark \ref{rem:2_2}). In both cases, the vector field $V = \frac{\partial}{\partial z}$ has the required properties. 

Let us prove that, for a given $\omega$, the vector field $V$ such that $L_V \omega = 0$ and $\omega(V)=1$ is unique.

Let us take a frame field $E_1$, $E_2$, $E_3$ on $U(p)$ such that $\Delta$ is spanned by $E_1$ and $E_2$, and  $E_3 = V$. Now let $W$ be a vector field with the required properties, then $W = W^1 E_1 + W^2 E_2 + W^3 E_3$. Since $\omega(V) = \omega(W) = 1$, we have $W^3=1$. 
As $E_3$ is an infinitesimal symmetry of $\omega$, $E_3$ is an infinitesimal symmetry of $\Delta$, too, therefore the vector fields $[E_3,E_1]$, $[E_3,E_2]$ are tangent to $\Delta$. 
From this follows that the vector fields $W^1 [E_1,E_2]$, $W^2 [E_1,E_2]$ are tangent to $\Delta$, but $\omega([E_1,E_2])\ne 0$ on $U(p) \setminus \Omega$, therefore $W^1 = W^2 = 0$ on  $U(p) \setminus \Omega$, and $W^1$ and $W^2$ vanish on $U(p)$.
Thus $W=E_3=V$ and the uniqueness has been proved.
\end{proof}

If for $p \in M$, a nonvanishing form $\omega \in Ann(\Delta)$ on a neighborhood $U$ of $p$ for which there exists a vector field $V$ on such that $L_V \omega = 0$ and $\omega(V)=1$ will be called a \textit{special form at} $p$, and $V$ the \textit{characteristic vector field of} $\omega$. 
Note that, if $\omega$ is special at each point of an open set $U \subset M$, then on $U$ we have a unique vector field $V$ such that  $L_V \omega = 0$ and $\omega(V)=1$.

Let us consider the form $\widetilde\omega =e^\varphi \omega$. In general, $\widetilde\omega$ is not special. 

\begin{proposition}
\label{prop:2_3}
a) If $p \in M \setminus \Omega$, then any nonvanishing form $\omega \in Ann(\Delta)$ is special.

b) If $p \in \Sigma$ and $\omega$ is special at $p$, then $\widetilde\omega = e^\varphi \omega$ is special at $p$ if and only if  on a neighborhood $U$ of $p$ we have $d\varphi|_\Delta = \lambda_w \xi$, where $\xi$ is a nonvanishing $1$-form on $\Delta$ in $U$.  
\end{proposition}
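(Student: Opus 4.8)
The plan is to treat the two parts separately, using the identity $\mathcal{L}_V\omega = \iota_V d\omega + d(\iota_V\omega)$: since the defining condition of a special form includes $\omega(V)=1$, we have $d(\iota_V\omega)=d(1)=0$, so \emph{specialness is equivalent to $\iota_V d\omega = 0$ together with $\omega(V)=1$}. This reduces everything to a pointwise-linear problem for the characteristic field.

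For part a), I would observe that $p\notin\Sigma$ means $(\omega\wedge d\omega)_p\ne 0$, so by continuity $\omega$ is a contact form on a neighborhood $U$ of $p$. On the $3$-manifold the $2$-form $d\omega$ has a $1$-dimensional kernel $K_q\subset T_qM$ at each $q\in U$, and $\omega|_{K_q}\ne 0$ (otherwise writing $d\omega = A\,e^1\wedge e^2$ with $K_q=\langle e_3\rangle$ gives $\omega\wedge d\omega = A\,\omega(e_3)\,e^3\wedge e^1\wedge e^2$, forcing $\omega\wedge d\omega=0$). Hence there is a unique $V(q)\in K_q$ with $\omega(V(q))=1$; it is smooth because $K$ varies smoothly, and it satisfies $\iota_V d\omega=0$ and $\omega(V)=1$, i.e.\ $\mathcal{L}_V\omega=0$. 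Thus $\omega$ is special, with $V$ its (Reeb) characteristic field.

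For part b) I would fix an adapted frame: let $V$ be the characteristic field of the special form $\omega$ and complete it to $\{E_1,E_2,E_3=V\}$ with $\{E_1,E_2\}$ a positively oriented orthonormal frame of $\Delta$, dual coframe $\{\theta^1,\theta^2,\theta^3=\omega\}$. Because $\iota_V d\omega=0$, the form $d\omega$ has no $\theta^i\wedge\theta^3$ component, so $d\omega=c\,\theta^1\wedge\theta^2$; evaluating on $(E_1,E_2)$ and using Proposition~\ref{prop:2_2}~d) gives $c=-\tfrac12\lambda_\omega$, hence
\[
d\omega = -\tfrac12\lambda_\omega\,\theta^1\wedge\theta^2 .
\]
I then compute $d\widetilde\omega = e^\varphi(d\varphi\wedge\theta^3 + d\omega)$ and, writing $d\varphi=\varphi_1\theta^1+\varphi_2\theta^2+\varphi_3\theta^3$ with $\varphi_a=E_a\varphi$, seek a smooth $\widetilde V=aE_1+bE_2+cE_3$ with $\widetilde\omega(\widetilde V)=1$ and $\iota_{\widetilde V}d\widetilde\omega=0$. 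The normalization forces $c=e^{-\varphi}$, and expanding $\iota_{\widetilde V}d\widetilde\omega=0$ gives the system $\tfrac12\lambda_\omega b=c\varphi_1$, $\tfrac12\lambda_\omega a=-c\varphi_2$, and $\varphi_1 a+\varphi_2 b=0$, the last being a consequence of the first two.

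The whole issue is then smoothness of $a,b$ across $\Sigma$. Off $\Sigma$ (where $\lambda_\omega\ne0$) the system forces $a=-2e^{-\varphi}\varphi_2/\lambda_\omega$ and $b=2e^{-\varphi}\varphi_1/\lambda_\omega$; by Proposition~\ref{prop:2_2}~b),c) the function $\lambda_\omega$ vanishes exactly on $\Sigma$ with $d\lambda_\omega\ne0$ there, so it is a first-order defining function of $\Sigma$. Therefore $a,b$ extend smoothly across $\Sigma$ precisely when $\varphi_1,\varphi_2$ are divisible by $\lambda_\omega$, that is, when $d\varphi|_\Delta=\lambda_\omega\,\xi$ with $\xi=\xi_1\theta^1+\xi_2\theta^2$ on $\Delta$. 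Given such $\xi$, the explicit field $\widetilde V=-2e^{-\varphi}\xi_2 E_1+2e^{-\varphi}\xi_1 E_2+e^{-\varphi}E_3$ settles the reverse implication (one checks all three equations, the third automatically); conversely the characteristic field of a special $\widetilde\omega$ recovers $\xi$ via $\xi_1=\tfrac12 e^{\varphi}b$, $\xi_2=-\tfrac12 e^{\varphi}a$, settling the forward implication. The main obstacle is exactly this crossing of $\Sigma$, where the naive solution blows up; controlling the order of vanishing through $d\lambda_\omega\ne0$ is what converts the analytic divisibility requirement into the clean geometric condition $d\varphi|_\Delta=\lambda_\omega\xi$, with the nonvanishing of $\xi$ corresponding to the $\Delta$-component $(a,b)$ of $\widetilde V$, i.e.\ to $\widetilde V$ not being collinear with $V$.
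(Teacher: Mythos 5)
Your proposal is correct, and it is worth recording where it coincides with and where it departs from the paper's argument. The shared backbone is the observation that, once $\omega(V)=1$ is imposed, $\mathcal{L}_V\omega=\iota_V d\omega$, so specialness reduces to solving $\iota_V d\omega=0$; the paper opens its proof with exactly this reduction. For part~a) you take a genuinely different and more self-contained route: you build the characteristic field directly as the Reeb field of the contact form $\omega$ (one-dimensional kernel of $d\omega$, on which $\omega$ cannot vanish, normalized by $\omega(V)=1$). The paper instead starts from a special form whose existence near $p$ is guaranteed by Proposition~\ref{prop:2_1} (i.e.\ by the Zhitomirskii normal forms) and shows that every conformal rescaling $e^\varphi\omega$ remains special by solving $d\varphi|_\Delta=e^\varphi\iota_W d\omega|_\Delta$ for $W\in\mathfrak{X}(\Delta)$, using nondegeneracy of $d\omega|_\Delta$ off $\Sigma$. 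Your version avoids the appeal to the normal-form result and is the standard contact-geometry argument; the paper's version has the advantage of being the $\lambda_\omega\neq 0$ specialization of the very same computation it then runs at points of $\Sigma$. For part~b) your argument is essentially the paper's, written in frame components $(a,b)$ of $\widetilde V$ rather than in the invariant form $\iota_W d\omega=-\tfrac12\lambda_\omega\,\Omega(W,\cdot)$: the system $\tfrac12\lambda_\omega b=e^{-\varphi}\varphi_1$, $\tfrac12\lambda_\omega a=-e^{-\varphi}\varphi_2$ is the componentwise version of the paper's equation \eqref{eq:2_3} restricted to $\Delta$, and the divisibility of $\varphi_1,\varphi_2$ by $\lambda_\omega$ is exactly the condition $d\varphi|_\Delta=\lambda_\omega\xi$. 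One small caveat, which you in fact handle more carefully than the paper does: the ``only if'' direction produces a \emph{smooth} $\xi$ (namely $\xi_1=\tfrac12 e^\varphi b$, $\xi_2=-\tfrac12 e^\varphi a$), but nothing forces it to be \emph{nonvanishing} (take $\varphi$ constant, for which $\widetilde V=e^{-\varphi}V$ works and $\xi=0$); your closing remark correctly identifies nonvanishing of $\xi$ with $\widetilde V$ not being collinear with $V$, whereas the paper asserts nonvanishing without justification. This is an imprecision in the statement as given, not a gap in your argument.
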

\begin{proof}
First note that if $V$ is a vector field corresponding to a special form $\omega$, then
$L_V \omega = d(\iota_V\omega) + \iota_V d\omega = \iota_V d\omega$ because $\iota_V\omega=1$.
Therefore, $V$ is the characteristic vector field of $\omega$ if and only if $\omega(V)=1$ and $\iota_V d\omega = 0$.

Let $\omega$  be a special form on a neighborhood $U$ of $p$, and $V$ be the corresponding characteristic vector field. Let us take a form $\widetilde{\omega} = e^\varphi \omega$ and find conditions on $\varphi$ for $\widetilde{\omega}$ to be a special form. 

First, $\widetilde\omega(\widetilde V) = 1$ if and only if $\widetilde V = e^{-\varphi} V + W$, where $W \in \mathfrak{X}(\Delta)$, because  $\omega(V)=1$. 
Further, we have
\begin{equation}
d\widetilde\omega = e^\varphi d\varphi \wedge \omega + e^\varphi d\omega = 
d\varphi\wedge\widetilde\omega + e^\varphi d\omega.
\label{eq:2_1}
\end{equation}
Then
\begin{equation}
\iota_{\widetilde V} d\widetilde\omega = 
\iota_{\widetilde V} d\varphi\, \omega - d\varphi\, \iota_{\widetilde V} \omega + 
e^\varphi\iota_{\widetilde V} d\omega = 
\widetilde V\varphi\, \omega - d\varphi\, \omega(\widetilde V) + 
\iota_{V} d\omega + e^\varphi\iota_{W} d\omega; 
\label{eq:2_2}
\end{equation}
As $V$ is the characteristic vector field of $\omega$, we have $\iota_V d\omega = 0$, hence follows
\begin{equation}
\iota_{\widetilde{V}} d\widetilde{\omega} = \widetilde V\varphi \widetilde\omega - d\varphi + e^\varphi \iota_W d\omega.
\label{eq:2_3}
\end{equation}

Assume that $p$ does not lie in $\Sigma$, then, by Proposition~\ref{prop:2_2}, d), we have that $d\omega|_\Delta$ is nondegenerate form, so one can find a unique $W$ such that $d\varphi(W') = e^\varphi i_W d\omega (W')$.  By \eqref{eq:2_3}, with this $W$, $i_{\widetilde{V}} d \widetilde{\omega} = 0$ on $\Delta$. Also, if we substitute $V$ to the right hand side of \eqref{eq:2_3}, then we get $\widetilde{V}\varphi\,e^\varphi - V\varphi = 0$, 
since $\iota_W d\omega (V) = 2 d\omega(W,V) = -\iota_V d\omega(W) = 0$. Thus, we have found $W$ such that the corresponding vector field $\widetilde{V} = e^{-\varphi} + W$ is the characteristic vector field for $\widetilde \omega$ since $\widetilde\omega(\widetilde V) = 1$ and $\iota_{\widetilde{V}} d\widetilde{\omega} =0$. Thus, the form $\widetilde\omega$ is special, and we have proved a).

For $p \in \Sigma$, $d\omega |_\Delta = \lambda_\omega \Omega$, where $\Omega$ is the area form of the metric on $\Delta$. Thus, if $\omega$ and $\widetilde\omega$ are special, by \eqref{eq:2_3} we have that $d\varphi|_\Delta = -\frac{1}{2}\lambda_\omega e^\varphi \Omega(W, \cdot)$, therefore in this case we have that $ d\varphi|_\Delta = \lambda_\omega \xi$, where $\xi(W') = -\frac{1}{2} e^\varphi \Omega(W, W')$ is a nonzero $1$-form on $\Delta$.
Now, if $d\varphi|_\Delta = \lambda_\omega \xi$, then one can find $W$ such that $\xi(W') =  -\frac{1}{2} e^\varphi \Omega(W, W')$ for any $W' \in \mathfrak{X}(\Delta)$. If we now set $\widetilde V = e^{-\varphi} V +W$, then $\iota_{\widetilde{V}} d\widetilde{\omega} (W') = 0$, for any $W' \in \mathfrak{X}(\Delta)$. Also, as before, we have $\iota_{\widetilde{V}} d\widetilde{\omega} (V) = 0$, hence follows $\iota_{\widetilde{V}} d\widetilde{\omega} = 0$. Thus, $\widetilde\omega$ is special and we have proved b).
\end{proof}
\begin{remark}
It is clear that any symmetry of the nonholonomic surface maps a special form to a special form.
\end{remark}
 
\subsection{Adapted frame}
Assume that the distribution $\Delta$ is given by a special form $\omega$.
In a neighborhood $U$ of $p \in M$ take a frame field constructed in the following way.
Since $\lambda_\omega$ vanishes at $\Sigma$ and $d\lambda_\omega \ne 0$ at $\Sigma$, $U$ is foliated by the level surfaces $\Sigma_c = \lambda_\omega^{-1}(c) \cap U$, $c \in (-\alpha,\beta)$, $\alpha,\beta>0$, of $\lambda_\omega$.
Moreover, since $\Delta$ is transversal to $\Sigma=\Sigma_0$, then $\Delta$ is transversal to $\Sigma_c$, too. 
We take $E_1$ be the unit vector field of the line distribution $\Delta \cap T\Sigma_c$ on $U$, $c \in (-\alpha,\beta)$, (in fact, there are two such vector fields, they are opposite each other, we take one of them). 
The vector field $E_2 \in \mathfrak{X}(\Delta)$ is such that $E_1, E_2$ is positively oriented orthonormal frame field of $\Delta$. 
For $E_3$ we take the characteristic vector field of $\omega$.  

Now, in the structure equations $[E_i,E_j] = c^k_{ij} E_k$, we have $c^3_{31} = c^3_{23}=0$ because the flow of $E_3$ maps $\Delta$ to $\Delta$, and $c^3_{12} = \lambda_\omega$ by definition of $\lambda_\omega$. 

Let $\{\eta^1,\eta^2,\eta^3\}$ be the coframe dual to $\{E_a\}$. Note that $\eta^3 = \omega$.Then the structure equations are  
\begin{eqnarray}
&&
d\eta^1 = C^1_{23} \eta^2 \wedge \eta^3 + C^1_{31} \eta^3 \wedge \eta^1 + C^1_{12} \eta^1 \wedge \eta^2, 
\\
&&
d\eta^2 = C^2_{23} \eta^2 \wedge \eta^3 + C^2_{31} \eta^3 \wedge \eta^1 + C^2_{12} \eta^1 \wedge \eta^2, 
\\
&&
d\eta^3 = - \lambda_\omega  \eta^1 \wedge \eta^2, 
\label{eq:3_4_3}
\end{eqnarray}

From the coframe construction it follows that 
\begin{equation}
d \lambda_\omega = \lambda_2 \eta^2 + \lambda_3 \eta^3,
\label{eq:3_5}
\end{equation}
as $\lambda_1 = E_1 \lambda_\omega = 0$.

Applying the exterior differential to  \eqref{eq:3_4_3}, we get that  
\begin{equation*}
0 = - d\lambda_\omega \eta^1 \wedge \eta^2 + \lambda_\omega d\eta^1 \wedge \eta^2 - \lambda_\omega \eta^1 \wedge d\eta^2 = (-\lambda_3 + \lambda_\omega(C^1_{31}-C^2_{23}))\eta^1\wedge\eta^2\wedge\eta^3. 
\label{eq:3_5_1}
\end{equation*}
hence follows that
\begin{equation}
\lambda_3 = \lambda_\omega(C^1_{31}-C^2_{23}).
\label{eq:3_5_2}
\end{equation}
Therefore $\lambda_3 = 0$ on $\Sigma$, and, as $d\lambda_\omega \ne 0$ on $\Sigma$ we have that $\lambda_2 = E_2\lambda \ne 0$ on $\Sigma$.

\subsection{Change of the coframe}
It is clear that the frame $\{E_a\}$, and so the coframe $\{\eta^a\}$, is uniquely determined by the special form $\omega$. Now let us find how the coframe and the structure equations transform under a change of the special form $\omega$.

Let us take two special forms $\omega$ and $\widetilde\omega = e^\varphi \omega$, where $d\varphi |_\Delta = \lambda_\omega \xi$ (see Proposition~\ref{prop:2_3}, b)). Let $\{\eta^a\}$ and $\{\tilde\eta^a\}$ be the corresponding coframe fields. Then from construction we have:
\begin{eqnarray}
&&
\eta^1 = \cos\alpha\, \tilde\eta^1 + \sin\alpha\,  \tilde\eta^2 + a\,  \tilde\eta^3
\notag
\\
&&
\eta^2 = -\sin\alpha\,  \tilde\eta^1 + \cos\alpha\,  \tilde\eta^2 + b\,  \tilde\eta^3
\\
&&
\eta^3 = e^{-\varphi} \, \tilde\eta^3. 
\notag
\label{eq:3_6}
\end{eqnarray}

From  Proposition~\ref{prop:2_3}, b) it follows that 
$d\varphi = \lambda_\omega \xi_1 \eta^1 + \lambda_\omega \xi_2 \eta^2  + \varphi_3 \eta^3$,
Then
\begin{equation}
\begin{split}
d\, \tilde\eta^3 = d (e^\varphi \eta^3) = 
e^\varphi d\varphi \wedge \eta^3 + e^\varphi d\eta^3 = e^\varphi \lambda_\omega (\xi_1 \eta^1 + \xi_2 \eta^2) \wedge \eta^3 + e^\varphi (-\lambda_\omega \eta^1 \wedge \eta^2)
\\
=\lambda_\omega [(\xi_1 \eta^1 + \xi_2 \eta^2) \wedge \tilde\eta^3 - e^\varphi \eta^1 \wedge \eta^2].
\label{eq:3_7}
\end{split}
\end{equation}
From \eqref{eq:3_6} it follows that
\begin{equation*}
\xi_1 \eta^1 + \xi_2 \eta^2 = (\xi_1 \cos\alpha - \xi_2 \sin\alpha)\tilde\eta^1 + (\xi_1 \sin\alpha + \xi_2 \cos\alpha)\tilde\eta^2 + (a\xi_1 + b\xi_2) \tilde\eta^3.
\label{eq:3_8}
\end{equation*}
then 
\begin{equation}
(\xi_1 \eta^1 + \xi_2 \eta^2)\wedge \tilde\eta^3 = (\xi_1 \cos\alpha - \xi_2 \sin\alpha)\tilde \eta^1\wedge \tilde\eta^3 + (\xi_1 \sin\alpha + \xi_2 \cos\alpha)\tilde \eta^2\wedge \tilde\eta^3. 
\label{eq:3_9}
\end{equation}
Also 
\begin{equation}
\eta^1 \wedge \eta^2 = \tilde\eta^1 \wedge \tilde\eta^2 + (b\cos\alpha  + a\sin\alpha )\tilde\eta^1 \wedge \tilde\eta^3 + (b\sin\alpha  - a\cos\alpha )\tilde\eta^2 \wedge \tilde\eta^3. 
\label{eq:3_10}
\end{equation}
The equation \eqref{eq:3_4_3} written for the coframe $\{\tilde\eta^a\}$ gives 
$d\tilde\eta^3 = -\lambda_{\tilde{\omega}} \tilde\eta^1 \wedge \tilde\eta^2$, and from Proposition~\ref{prop:2_2} we have $\lambda_{\tilde{\omega}} = e^\varphi \lambda_\omega$.
Hence follows $d\tilde\eta^3 = - e^\varphi\lambda_{\omega} \tilde\eta^1 \wedge \tilde\eta^2$.
Thus \eqref{eq:3_7}, \eqref{eq:3_9}, and \eqref{eq:3_10} together give 
the equation system 
\begin{equation}
\left\{
\begin{array}{l}
\lambda_\omega[\xi_1 \cos\alpha - \xi_2\sin\alpha - e^\varphi(b\cos\alpha  + a\sin\alpha)]=0
\\
\lambda_\omega[\xi_1 \sin\alpha - \xi_2\cos\alpha - e^\varphi(b\sin\alpha  - a\cos\alpha)]=0
\end{array}
\right.
\label{eq:3_11}
\end{equation}
As $\lambda_\omega \ne 0$ almost everythere in $U$, we have that the expressions in brackets in \eqref{eq:3_11} vanish. Therefore, we arrive at the system
\begin{equation}
\left\{
\begin{array}{l}
(\xi_1 - e^\varphi b) \cos\alpha - (\xi_2+e^\varphi a)\sin\alpha = 0
\\
(\xi_1 - e^\varphi b) \sin\alpha - (\xi_2+e^\varphi a)\cos\alpha = 0
\end{array}
\right.
\label{eq:3_12}
\end{equation}
Thus we have found 
\begin{equation}
a = - e^{-\varphi} \xi_2, \qquad b =  e^{-\varphi} \xi_1.
\label{eq:3_13}
\end{equation}

Now it remains to find the function $\alpha$ in \eqref{eq:3_6}. To this end we use \eqref{eq:3_5}.
We have 
\begin{equation}
\begin{split}
d \lambda_{\widetilde\omega} = d (e^\varphi \lambda_\omega) = e^\varphi \lambda_\omega d\varphi + e^\varphi d\lambda_\omega = 
e^\varphi\lambda_\omega(\lambda_\omega \xi_1 \eta^1+\lambda_\omega \xi_2 \eta^2+\varphi_3\eta^3) + e^\varphi(\lambda_2 \eta^2 + \lambda_3 \eta^3) =
\\
e^\varphi [ \lambda_\omega^2 \xi_1 \eta^1 + (\lambda_\omega^2 \xi_2 + \lambda_2) \eta^2 + (\lambda_\omega \varphi_3 + \lambda_3)\eta^3].
\end{split}
\label{eq:3_14}
\end{equation}
To \eqref{eq:3_14} we substitute \eqref{eq:3_6} and get that 
\begin{equation}
\begin{split}
d\lambda_{\widetilde\omega} = e^\varphi [\lambda_\omega^2 \xi_1 \cos\alpha - (\lambda_\omega^2\xi_2 + \lambda_2) \sin\alpha] \tilde\eta^1 + 
e^\varphi[\lambda_\omega^2 \xi_1 \sin\alpha + (\lambda_\omega^2\xi_2 + \lambda_2) \cos\alpha] \tilde\eta^2 +
\\
 (\lambda_\omega \varphi_3 + \lambda_2 \xi_1 + \lambda_3)\tilde\eta^3.
\end{split}
\label{eq:3_15}
\end{equation}
From this follows that 
\begin{equation}
d\lambda_{\widetilde\omega} = \widetilde\lambda_1 \tilde\eta^1 + \widetilde\lambda_2 \tilde\eta^2 + \widetilde\lambda_3 \tilde\eta^3,  
\label{eq:3_16}
\end{equation}
where
\begin{eqnarray}
&&
\widetilde\lambda_1 = e^\varphi [\lambda_\omega^2 \xi_1 \cos\alpha - (\lambda_\omega^2\xi_2 + \lambda_2) \sin\alpha],
\label{eq:3_17_1}
\\
&&
\widetilde\lambda_2 = e^\varphi[\lambda_\omega^2 \xi_1 \sin\alpha + (\lambda_\omega^2\xi_2 + \lambda_2) \cos\alpha] 
\label{eq:3_17_2}
\\
&&
\widetilde\lambda_3 = \lambda_\omega \varphi_3 + \lambda_2 \xi_1 + \lambda_3
\label{eq:3_17_3}
\end{eqnarray}
From \eqref{eq:3_5_2} it follows that $\lambda_3 = \lambda_\omega f$, in the same way, $\widetilde\lambda_3 = \lambda_{\widetilde\omega} \tilde f$, where $f$, $\tilde f$ are functions. Then $\widetilde\lambda_3 = e^\varphi \lambda_\omega \tilde f$. 
As $\lambda_2 \ne 0$ at $\Sigma$ (see reasoning below \eqref{eq:3_5_2}), from \eqref{eq:3_17_3} we have that  
\begin{equation}
\xi_1 = \lambda_\omega \mu.  
\label{eq:3_18}
\end{equation}
Also, by \eqref{eq:3_5}, $\widetilde\lambda_1 = 0$, hence \eqref{eq:3_17_1} and \eqref{eq:3_18} give us the expression for $\alpha$:
\begin{equation}
\tan\alpha = \frac{\lambda_\omega^3 \mu}{\lambda_\omega^2\xi_2+\lambda_2}. 
\label{eq:3_19}
\end{equation}
Thus we have proved 
\begin{proposition}
Let $\left\{ \eta^a\right\}$ be the adapted frame determined by a special form $\omega$, and $\left\{ \tilde\eta^a\right\}$ be the adapted frame determined by a special form $\widetilde\omega = e^\varphi\omega$. Then 
\begin{eqnarray}
&&
\eta^1 = \cos\alpha\, \tilde\eta^1 + \sin\alpha\,  \tilde\eta^2 - e^{-\varphi}\xi_2 \,  \tilde\eta^3
\label{eq:3_20_1}
\\
&&
\eta^2 = -\sin\alpha\,  \tilde\eta^1 + \cos\alpha\,  \tilde\eta^2 + e^{-\varphi}\xi_1\,  \tilde\eta^3
\label{eq:3_20_2}
\\
&&
\eta^3 = e^{-\varphi} \, \tilde\eta^3, 
\label{eq:3_20_3}
\end{eqnarray}
Here functions $\xi_1$, $\xi_2$, and $\alpha$ are determined by $\varphi$ in the following way: 
\begin{eqnarray}
&& 
E_1\varphi = \varphi_1 = \lambda_\omega \xi_1 = \lambda_\omega^2\mu
\label{eq:3_21_1}
\\
&&
E_2\varphi = \varphi_2 = \lambda_\omega \xi_2
\label{eq:3_21_2}
\\
&&
\tan\alpha = \frac{\lambda_\omega^3 \mu}{\lambda_\omega^2\xi_2+\lambda_2}, 
\label{eq:3_21_3}
\end{eqnarray}
where $\left\{ E_a \right\}$ is the adapted frame dual to $\left\{ \eta^a \right\}$, $\mu$ is a function, and $\lambda_2 = E_2\lambda_\omega$.
\label{prop:2_5}
\end{proposition}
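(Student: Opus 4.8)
The plan is to compare the two adapted frames directly, using the fact that each special form uniquely determines its adapted frame. Since $\{E_1,E_2\}$ and $\{\tilde E_1,\tilde E_2\}$ are both positively oriented orthonormal frames of the same plane field $\Delta$, their restrictions to $\Delta$ differ by an $SO(2)$ rotation through some angle $\alpha$; and since $\eta^3=\omega$ while $\tilde\eta^3=\tilde\omega=e^\varphi\omega=e^\varphi\eta^3$, we get $\eta^3=e^{-\varphi}\tilde\eta^3$ at once, which is \eqref{eq:3_20_3}. Writing the most general compatible relation, I would set $\eta^1=\cos\alpha\,\tilde\eta^1+\sin\alpha\,\tilde\eta^2+a\,\tilde\eta^3$ and $\eta^2=-\sin\alpha\,\tilde\eta^1+\cos\alpha\,\tilde\eta^2+b\,\tilde\eta^3$, with three unknown functions $\alpha,a,b$ to be pinned down by the structure equations.

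First I would determine $a$ and $b$ from the structure equation for $d\eta^3$. By Proposition~\ref{prop:2_3}b) we may expand $d\varphi=\lambda_\omega\xi_1\eta^1+\lambda_\omega\xi_2\eta^2+\varphi_3\eta^3$, and by \eqref{eq:3_4_3} we have $d\eta^3=-\lambda_\omega\,\eta^1\wedge\eta^2$. Differentiating $\tilde\eta^3=e^\varphi\eta^3$ and substituting these gives $d\tilde\eta^3$ in the $\eta$-coframe; on the other hand the same structure equation for the tilde frame, together with $\lambda_{\tilde\omega}=e^\varphi\lambda_\omega$ from Proposition~\ref{prop:2_2}a), gives $d\tilde\eta^3=-e^\varphi\lambda_\omega\,\tilde\eta^1\wedge\tilde\eta^2$. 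Rewriting both sides in the $\tilde\eta$-coframe via the ansatz and matching the coefficients of $\tilde\eta^1\wedge\tilde\eta^3$ and $\tilde\eta^2\wedge\tilde\eta^3$ yields a $2\times2$ system whose bracketed factors carry an overall $\lambda_\omega$; since $\lambda_\omega\ne0$ almost everywhere in $U$, these factors vanish, and solving gives $a=-e^{-\varphi}\xi_2$, $b=e^{-\varphi}\xi_1$, that is, \eqref{eq:3_20_1}--\eqref{eq:3_20_2}.

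The remaining and most delicate step is fixing the rotation angle $\alpha$, which is invisible to $d\eta^3$. Here the geometric normalization of the adapted frame enters: $E_1$ spans $\Delta\cap T\Sigma_c$, so $E_1\lambda_\omega=0$ and, as in \eqref{eq:3_5}, $d\lambda_\omega$ has no $\eta^1$-component; the same must hold for the $\tilde\eta^1$-component $\tilde\lambda_1$ of $d\lambda_{\tilde\omega}$. I would compute $d\lambda_{\tilde\omega}=d(e^\varphi\lambda_\omega)$, expand it in the $\eta$-coframe using \eqref{eq:3_5}, substitute the frame change to pass to the $\tilde\eta$-coframe, and read off $\tilde\lambda_1,\tilde\lambda_2,\tilde\lambda_3$. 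Before imposing $\tilde\lambda_1=0$ I would invoke \eqref{eq:3_5_2} in the divisibility form $\lambda_3=\lambda_\omega f$ (and likewise $\tilde\lambda_3=\lambda_{\tilde\omega}\tilde f$): comparing with the computed $\tilde\lambda_3$ and using $\lambda_2=E_2\lambda_\omega\ne0$ on $\Sigma$ forces $\xi_1=\lambda_\omega\mu$ for some function $\mu$, which is \eqref{eq:3_21_1}. Finally, setting $\tilde\lambda_1=0$ and inserting $\xi_1=\lambda_\omega\mu$ gives $\tan\alpha=\lambda_\omega^3\mu/(\lambda_\omega^2\xi_2+\lambda_2)$, namely \eqref{eq:3_21_3}, which together with $\varphi_2=E_2\varphi=\lambda_\omega\xi_2$ in \eqref{eq:3_21_2} completes the statement.

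I expect the main obstacle to be exactly this last step. Whereas $a$ and $b$ fall out algebraically from a single structure equation, determining $\alpha$ requires combining the normalization $E_1\lambda_\omega=0$ with the divisibility $\lambda_3=\lambda_\omega f$, and the extraction $\xi_1=\lambda_\omega\mu$ has to be justified near $\Sigma$, where $\lambda_\omega$ vanishes but $\lambda_2$ does not. The argument must therefore track carefully which quantities are divisible by $\lambda_\omega$ rather than simply dividing through, so that the resulting formulas remain valid on all of $U$, including on $\Sigma$ itself.
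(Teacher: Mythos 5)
Your proposal is correct and follows essentially the same route as the paper: the same ansatz for the frame change, determination of $a$ and $b$ by matching the $\tilde\eta^1\wedge\tilde\eta^3$ and $\tilde\eta^2\wedge\tilde\eta^3$ coefficients in $d\tilde\eta^3$ and cancelling the overall factor $\lambda_\omega$, and determination of $\alpha$ from the vanishing of the $\tilde\eta^1$-component of $d\lambda_{\tilde\omega}$ combined with the divisibility $\lambda_3=\lambda_\omega f$ and $\lambda_2\ne 0$ on $\Sigma$ to extract $\xi_1=\lambda_\omega\mu$. The subtleties you flag at the end are exactly the ones the paper handles, and in the same way.
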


\subsection{Invarians of sub-Riemannian surface along the singular surface}

Let $\left\{ \eta^a\right\}$ be the adapted frame determined by a special form $\omega$, and $\left\{ \tilde\eta^a\right\}$ be the adapted frame determined by a special form $\widetilde\omega = e^\varphi\omega$. Then we have the structure equations: $d\eta^a = C^a_{bc} \eta^b \wedge \eta^c$ and  $d\tilde\eta^a = \tilde C^a_{bc} \tilde\eta^b \wedge \tilde\eta^c$. Let us denote the restrictions of the structure functions $C^a_{bc}$ and $\tilde C^a_{bc}$ to the surface $\Sigma$ by  $Q^a_{bc}$ and $\tilde Q^a_{bc}$, respectively.
Let us find relation between  $Q^a_{bc}$ and $\tilde Q^a_{bc}$, 

The surface $\Sigma$ is given by the equation $\lambda_\omega = 0$. Using Proposition~\ref{prop:2_5}, we get that at the points of $\Sigma$ the equalities \eqref{eq:3_20_1}--\eqref{eq:3_20_3} are written as follows:
\begin{eqnarray}
&&
\eta^1 = \tilde\eta^1 - e^{-\varphi}\xi_2 \,  \tilde\eta^3
\label{eq:3_22_1}
\\
&&
\eta^2 = \tilde\eta^2  
\label{eq:3_22_2}
\\
&&
\eta^3 = e^{-\varphi} \, \tilde\eta^3, 
\label{eq:3_22_3}
\end{eqnarray}
Now, let us take the exterior derivative of \eqref{eq:3_20_1}  
\begin{equation}
\begin{split}
d\eta^1 = -\sin\alpha d\alpha\wedge\tilde\eta^1 + \cos\alpha d\tilde\eta^1 + \cos\alpha  d\alpha\wedge\tilde\eta^2 + \sin\alpha d\tilde\eta^2 + 
\\
 e^{-\varphi} \xi_2 d\varphi\wedge\tilde\eta^3 - e^{-\varphi} d\xi_2\wedge\tilde\eta^3 - e^{-\varphi} \xi_2 d\tilde\eta^3. 
\end{split}
\label{eq:3_23_1}
\end{equation}
and take the result at a point of $\Sigma$, then we have, by \eqref{eq:3_21_3}, that $\cos\alpha=1$, $\sin\alpha=0$, $d\alpha = 0$. Also, by \eqref{eq:3_21_1} and  \eqref{eq:3_21_2}, 
\begin{equation}
d\varphi \wedge \tilde\eta^3 = (\varphi_1 \eta^1 + \varphi_2 \eta^2 + \varphi_3 \eta^3)\wedge\tilde\eta^3 = \lambda_\omega \xi_1 \eta_1 \wedge \tilde\eta^3 + \lambda_\omega \xi_2 \eta_2 \wedge \tilde\eta^3,
\label{eq:3_24}
\end{equation}
hence, at points of $\Sigma$, $d\varphi \wedge \tilde\eta^3 = 0$. In addition, from \eqref{eq:3_4_3} it follows that at $\Sigma$, $d\tilde\eta^3=0$. Therefore, on $\Sigma$ we have 
\begin{equation}
d\eta^1 = d\tilde\eta^1 - e^{-\varphi} d\xi_2 \wedge \tilde\eta^3 = d\tilde\eta^1 - d\xi_2 \wedge \eta^3. 
\label{eq:3_25}
\end{equation}
From \eqref{eq:3_22_1}--\eqref{eq:3_22_3} it follows that on $\Sigma$ we have 
\begin{eqnarray}
&&
\tilde\eta^1 = \eta^1 + \xi_2 \,\eta^3
\label{eq:3_26_1}
\\
&&
\tilde\eta^2 = \eta^2  
\label{eq:3_26_2}
\\
&&
\tilde\eta^3 = e^{\varphi} \,\eta^3, 
\label{eq:3_26_3}
\end{eqnarray}
Then, at points in $\Sigma$ we have 
\begin{multline}
d\tilde\eta^1 = \tilde Q^1_{23} \tilde\eta^2 \wedge \tilde\eta^3 + \tilde Q^1_{31} \tilde\eta^3 \wedge \tilde\eta^1 + \tilde Q^1_{12} \tilde\eta^1 \wedge \tilde\eta^2 = 
\\
(e^\varphi \tilde Q^1_{23} - \xi_2 \tilde Q^1_{12}) \eta^2 \wedge \eta^3 +
e^\varphi \tilde Q^1_{31} \eta^3 \wedge \eta^1 +
\tilde Q^1_{12} \eta^1 \wedge \eta^2.
\label{eq:3_27}
\end{multline}
Set $d\xi_2 = \xi_{21}\eta^1 + \xi_{22}\eta^2 + \xi_{23}\eta^3$, and substitute it together with \eqref{eq:3_27} to \eqref{eq:3_25}.   
Then we get 
\begin{equation}
d\eta^1 = 
(e^\varphi \tilde Q^1_{23} - \xi_2 \tilde Q^1_{12} - \xi_{22}) \eta^2 \wedge \eta^3 +
(e^\varphi \tilde Q^1_{31} + \xi_{21}) \eta^3 \wedge \eta^1 +
\tilde Q^1_{12} \eta^1 \wedge \eta^2.  
\label{eq:3_28}
\end{equation}
Thus,
\begin{equation}
Q^1_{23} = e^\varphi \tilde Q^1_{23} - \xi_2 \tilde Q^1_{12} - \xi_{22}; 
\quad
Q^1_{31} =  e^\varphi \tilde Q^1_{31} + \xi_{21};
\quad
Q^1_{12} = \tilde Q^1_{12}. 
\label{eq:3_29}
\end{equation}
In the same manner we prove that $d\eta^2 = d\tilde\eta^2 + d\xi_1\wedge\eta^3$, By \eqref{eq:3_18}, we have 
\begin{equation}
d\xi_1 = \mu  d\lambda_\omega + \lambda_\omega d\mu = \mu(\lambda_2 \eta^2 + \lambda_3\eta^3) + \lambda_\omega d\mu
\label{eq:3_30}
\end{equation}  
By \eqref{eq:3_5_2}, we get that $\lambda_3 = 0$ on $\Sigma$, so at points of $\Sigma$ we have
\begin{equation}
d\xi_1 = \mu\lambda_2 \eta^2.
\label{eq:3_31}
\end{equation}
Then we get 
\begin{equation}
Q^2_{23} = e^\varphi \tilde Q^2_{23} - \xi_2 \tilde Q^2_{12} + \mu\lambda_2; 
\quad
Q^2_{31} =  e^\varphi \tilde Q^2_{31};
\quad
Q^2_{12} = \tilde Q^2_{12}. 
\end{equation}
Thus we have proved
\begin{proposition}
Let $d\eta^a = C^a_{bc} \eta^b \wedge \eta^c$ be the structure equations of the adapted frame of the sub-Riemmanian surface in a neighborhood of a point in $\Sigma$. 
The functions $Q^1_{12} = C^1_{12}|_\Sigma$ and $Q^2_{12} = C^2_{12}|_\Sigma$ do not depend on the choice of adapted frame, and so these functions are invariants of the surface.
\end{proposition}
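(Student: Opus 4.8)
The plan is to reduce the claimed invariance to the coframe-transformation formulas already established in Proposition~\ref{prop:2_5}, evaluated on the singular surface $\Sigma$. The adapted frame $\{E_a\}$, hence the coframe $\{\eta^a\}$ and its structure functions $C^a_{bc}$, is determined by the special form $\omega$ once a direction for $E_1$ along $\Delta\cap T\Sigma_c$ is fixed; and by Proposition~\ref{prop:2_3} any two special forms for the same $(\Delta,\langle\cdot,\cdot\rangle)$ differ by $\widetilde\omega=e^\varphi\omega$ with $d\varphi|_\Delta=\lambda_\omega\xi$. It therefore suffices to compare the structure functions of the two adapted coframes $\{\eta^a\}$ and $\{\tilde\eta^a\}$ on $\Sigma$ and to verify that the $\eta^1\wedge\eta^2$--coefficients of $d\eta^1$ and $d\eta^2$ are unchanged.

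First I would specialize the transformation \eqref{eq:3_20_1}--\eqref{eq:3_20_3} to $\Sigma$. The decisive simplification is that $\lambda_\omega=0$ on $\Sigma$ forces $\tan\alpha=0$ in \eqref{eq:3_21_3}, so that $\alpha=0$; since moreover $\alpha$ vanishes to third order in $\lambda_\omega$, one also has $d\alpha=0$ along $\Sigma$. Thus on $\Sigma$ the rotation in the $(\eta^1,\eta^2)$--plane trivializes and the coframe change reduces to the shear and scaling \eqref{eq:3_22_1}--\eqref{eq:3_22_3}, in which every correction term carries the factor $\eta^3=\omega$.

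Next I would differentiate. Taking the exterior derivative of \eqref{eq:3_20_1}, all terms containing $d\alpha$, $\sin\alpha$, or $d\varphi\wedge\tilde\eta^3$ drop on $\Sigma$ (the last because $d\varphi\wedge\tilde\eta^3$ is proportional to $\lambda_\omega$ by \eqref{eq:3_24}), leaving $d\eta^1=d\tilde\eta^1-d\xi_2\wedge\eta^3$, and similarly $d\eta^2=d\tilde\eta^2+d\xi_1\wedge\eta^3$. I would then expand $d\tilde\eta^1$ and $d\tilde\eta^2$ in the coframe $\{\eta^a\}$ using \eqref{eq:3_26_1}--\eqref{eq:3_26_3}. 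The point to emphasize is that $\eta^1\wedge\eta^2$ does not involve $\eta^3$, so that every correction above, namely $d\xi_2\wedge\eta^3$, $d\xi_1\wedge\eta^3$, and the $\xi_2\,\eta^3$ and $e^\varphi$ factors in \eqref{eq:3_26_1}--\eqref{eq:3_26_3}, feeds only into the $\eta^2\wedge\eta^3$ and $\eta^3\wedge\eta^1$ components. Hence the coefficient of $\eta^1\wedge\eta^2$ in $d\eta^1$ equals that in $d\tilde\eta^1$, giving $Q^1_{12}=\tilde Q^1_{12}$, and likewise $Q^2_{12}=\tilde Q^2_{12}$. By contrast the same computation shows that $Q^1_{23},Q^1_{31},Q^2_{23},Q^2_{31}$ acquire genuine $\varphi$--dependent corrections, which explains why only $Q^1_{12}$ and $Q^2_{12}$ survive as invariants.

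The subtlety I expect to be the main obstacle is the residual sign ambiguity noted in the construction of the adapted frame: replacing $E_1$ by its opposite forces $E_2\mapsto-E_2$ to preserve the orientation of $\Delta$, and a direct check shows that this flips the sign of both $C^1_{12}$ and $C^2_{12}$. To obtain honest invariants I would remove this ambiguity canonically by orienting $E_2$ through $d\lambda_\omega$: since $\lambda_2=E_2\lambda_\omega\neq0$ on $\Sigma$ by the remark following \eqref{eq:3_5_2}, one may require $E_2\lambda_\omega>0$, which fixes $E_2$ and then $E_1$ by orientation. Because $\lambda_{\widetilde\omega}=e^\varphi\lambda_\omega$ with $e^\varphi>0$, this sign convention is preserved under every change of special form, so with it $Q^1_{12}$ and $Q^2_{12}$ are well defined on $\Sigma$ and are genuine invariants of the surface.
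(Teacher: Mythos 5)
Your proposal follows essentially the same route as the paper: specialize the coframe transformation of Proposition~\ref{prop:2_5} to $\Sigma$ (where $\lambda_\omega=0$ forces $\alpha=0$ and $d\alpha=0$), differentiate to get $d\eta^1=d\tilde\eta^1-d\xi_2\wedge\eta^3$ and $d\eta^2=d\tilde\eta^2+d\xi_1\wedge\eta^3$, and observe that all correction terms carry a factor of $\eta^3$ and hence cannot touch the $\eta^1\wedge\eta^2$ coefficient. Your closing remark about fixing the residual sign of $E_1$ by demanding $E_2\lambda_\omega>0$ (preserved since $\lambda_{\widetilde\omega}=e^\varphi\lambda_\omega$) is a genuine refinement that the paper leaves implicit, and it is worth keeping.
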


\begin{corollary}
If $V$ is an infinitesimal symmetry of the sub-Riemannian surface, then $V Q^1_{12}= 0$, and $V Q^2_{12} = 0$. 
\end{corollary}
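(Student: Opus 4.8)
The plan is to run the same argument that yields $V\mathcal{K}=0$ and $V\mathcal{M}=0$ in the proof of Theorem~\ref{Th:Theorem-3}: once a function is known to be an invariant of $\mathcal{S}$, it is preserved by every symmetry, and differentiating this invariance along the flow of an infinitesimal symmetry kills it. Since the preceding Proposition establishes that $Q^1_{12}$ and $Q^2_{12}$ are invariants of the surface, the proof reduces to three points: that the flow $\phi_t$ of $V$ preserves $\Sigma$ (so that $V$ is tangent to $\Sigma$ and the derivatives $VQ^a_{12}$ are meaningful), that every symmetry carries these invariants to themselves, and the final differentiation at $t=0$.

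First I would observe that $\Sigma$ is attached to the distribution alone: by Remark~\ref{rem:2_1} and Proposition~\ref{prop:2_2}, b), it is the zero set of the nonholonomity function of any section of $Ann(\Delta)$, hence the locus where $\Delta$ fails to be contact. A symmetry $F$ satisfies $F(\Delta_p)=\Delta_{F(p)}$, so it maps $\Delta$ to $\Delta$ and therefore maps $\Sigma$ onto $\Sigma$. Applying this to the flow of $V$ gives $\phi_t(\Sigma)=\Sigma$ for all $t$; since a vector field whose flow preserves a submanifold is tangent to it, $V|_\Sigma\in T\Sigma$, and $VQ^1_{12}$, $VQ^2_{12}$ are honest directional derivatives of functions defined on $\Sigma$.

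Next I would show that $F^{*}Q^a_{12}=Q^a_{12}$ on $\Sigma$ for every symmetry $F$. As noted in the Remark following Proposition~\ref{prop:2_3}, $F$ sends a special form $\omega$ to a special form $\bar\omega=(F^{-1})^{*}\omega$; because $F$ preserves $\Delta$, the metric, and the orientations, the entire adapted-frame construction is equivariant, so the adapted frame of $\bar\omega$ is the $dF$-image of the adapted frame of $\omega$ and the structure functions transform by $\bar C^a_{bc}\circ F=C^a_{bc}$. Restricting to $\Sigma$ gives $\bar Q^a_{12}\circ F=Q^a_{12}$. But the preceding Proposition says $Q^1_{12}$ and $Q^2_{12}$ do not depend on the chosen adapted frame, so $\bar Q^a_{12}=Q^a_{12}$ as functions on $\Sigma$; combining the two equalities yields $Q^a_{12}\circ F=Q^a_{12}$. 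Taking $F=\phi_t$ and differentiating at $t=0$, together with the tangency from the previous step, gives $VQ^1_{12}=0$ and $VQ^2_{12}=0$.

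I expect the main obstacle to be the equivariance check hidden in the middle step, namely verifying in detail that the normalizations defining the adapted frame --- the choice of $E_1$ along $\Delta\cap T\Sigma_c$, the positively oriented orthonormal completion $E_2$, and the characteristic vector field $E_3$ of $\omega$ --- are each carried by $dF$ to the corresponding data for $\bar\omega$, so that the structure functions really do transform by pullback. Once this naturality is in hand, the differentiation is immediate, exactly as for $\mathcal{K}$ and $\mathcal{M}$ in Theorem~\ref{Th:Theorem-3}.
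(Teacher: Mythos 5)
Your proposal is correct and follows essentially the same route as the paper: symmetries preserve $\Sigma$, carry special forms to special forms and adapted frames to adapted frames, hence $F^{*}Q^a_{12}=Q^a_{12}$, and differentiating along the flow of $V$ (which is tangent to $\Sigma$) gives the result. You merely make explicit the equivariance and tangency points that the paper's one-line proof leaves implicit.
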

\begin{proof}
Any symmetry $f$ of the sub-Riemannian surface maps $\Sigma$ onto itself, and sends a special form to a special form and the corresponding adapted frame to the corresponding adapted frame. Therefore, $f^* Q^1_{12} = Q^1_{12}$ and  $f^* Q^2_{12} = Q^2_{12}$. From this follows that  an  infinitesimal symmetry $V$ is tangent to $\Sigma$ and $V Q^1_{12} = V Q^2_{12} = 0$.  
\end{proof}



\begin{thebibliography}{99}


\bibitem {Aminov} Yu. Aminov, The geometry of vector fields, Gordon and Breach Publishers,  Amsterdam, 2000. 

\bibitem{Bieliavsky} P. Bieliavsky, E. Falbel, C. Gorodski, The classification of simple-connected contact sub-Riemannian symmetric spaces,  Pacific J. of Math. Vol. 188, 1999 

\bibitem{Bloch1} A.M. Bloch, J.E. Marsden, D. Zenkov, Nonholonomic Dynamics, Notices of AMS, Vol 52,3, 2005. 

\bibitem{Bloch2} A.M. Bloch, P.S. Krishnaprasad, J.E. Marsden, R. Murray, Nonholonomic Mechanical Systems with Symmetry, Arch. Rational Mech. Anal. 169, 1996. 

\bibitem {Ehlers1} K. Ehlers, Geometric Equivalence of Nonholonomic three-manifolds, Proceedinds of the fourth International Conference on Dynamical System and Differential Equations, May 24 – 27, (2002), Wilmington, NC, USA pp. 246–255. 

\bibitem{Ehlers2} K. Ehlers, J. Koiller, P. M. Rios, Nonholonomic Systems:  Cartan's equivalence
and Hamiltonization, Vienna, Preprint ESI 1389 (2003) 

\bibitem{Falbel1} E. Falbel, C. Gorodski, Sub-Riemannian Homegeneous Spaces in Dimensions 3 and 4, Geometriae Dedicadta 62, 1996. 

\bibitem{Falbel2} E. Falbel, C. Gorodski, On contact sub-Riemannian symmetries spaces, Annales scientifiques de l'É.N.S. 4e série, tome 28, nº5, 1995, p.571-589 

\bibitem{Fels} M. Fels, P.J. Olver, Moving Coframes: I. A Practical Algorithm, Acta Applicandae Mathematicae 51, Kluwer Academic Publishers. Printed in the Netherlands, 1998, p.161–213. 

\bibitem{Gardner} R.B. Gardner, The Method of Equivalence and Its Applications, SIAM, Philadelphia, 1989. 

\bibitem{Grozman} P.Ya. Grozman, D.A. Leites, Nonholonomic Riemann and Weyl tensor for flag manifolds, Theoretical and Mathematical Physics, 153, 2007. 

\bibitem{Hughen} K. Hughen, The Geometry of Subriemannian Three-Manifolds, PhD. Thesis, Duke Univ. 1995. 

\bibitem{KN} S. Kobayashi, K. Nomizu, Foundations of Differential Geometry, Vols. I and II, Interscience, London, 1963. 

\bibitem{Montgomery} R. Montgomery, A tour of Subriemannian Geometries, Their Geodesics and Applications,  Mathematical Surveys and Monographs, V. 91, AMS, Providence, 2002. 

\bibitem{Olver}	P.J. Olver, Equivalence, invariants, and symmetry, Cambridge University Press, Cambridge, 1995.  

\bibitem{Pavlov} V.P. Pavlov, V.M. Sergeev, Thermodynamics from the differential geometry standpoint, Theoretical and Mathematical Physics, 157, 2008. 

\bibitem{Pereira} K.V. Pereira, Vector fields, invariant varieties and linear system, Annales de L'Institute Fourier Tome 51, nº5, 2001. 

\bibitem{Schempp} W. Schempp, Sub-Riemannian Geometry and Clinical Magnetic Resonance Tomography, Math. Meth.  Appl. Sci., 22, 1999. 

\bibitem{Shapukov} B.N. Shapukov, Connections on dlfferentiable bundles, Itog. Nauki Tekh. Probl. Geom.,
15, 61-93, 1983. 

\bibitem{Strichartz} R.S. Strichartz, Sub-Riemannian Geometry, J. Differential Geometry, 24, 1986. 

\bibitem{Sachkov} Y. L. Sachkov. Symmetries of flat rank two distributions and sub-riemannian structures, Trans. AMS, V. 356(2), 2003, 457--494.


\bibitem{Vershik} A.M. Vershik, V. Gerhskovich, Nonholonomic Dynamical Systems, Geometry of Distributions and Variational Problems, Dynamical Systems VII. Ed. V.I. Arnol’d and S.P. Novikov, vol. 16 of the Encyclopedia of Mathematical Sciences series, Springer-Verlag, NY, 1994 

\bibitem{Zhitomirskii} M. Zhitomirskii, Typical singularities of Differential 1-forms and Pfaffian equations, Translation of Mathematical Monographs, Vol. 113, AMS, 1992 

\end{thebibliography}
\end{document}